\begin{document}
\numberwithin{equation}{section}
\newcounter{thmcounter}
\newcounter{Remarkcounter}
\newcounter{Defcounter}
\newcounter{Assucounter}
\numberwithin{thmcounter}{section}
\numberwithin{Assucounter}{thmcounter}
\newcommand*{\QEDA}{\hfill\ensuremath{\blacksquare}}

\renewcommand{\labelenumi}{(\theenumi)}

\newtheorem{Assu}[Assucounter]{Assumption}
\newtheorem{Prop}[thmcounter]{Proposition}
\newtheorem{Corol}[thmcounter]{Corollary}
\newtheorem{theorem}[thmcounter]{Theorem}
\newtheorem{Lemma}[thmcounter]{Lemma}
\theoremstyle{definition}
\newtheorem{Def}[Defcounter]{Definition}
\theoremstyle{remark}
\newtheorem{Remark}[Remarkcounter]{Remark}
\newtheorem{Example}[Remarkcounter]{Example}

\newcounter{defcounter}
\setcounter{defcounter}{0}
\newcounter{defcounteri}
\setcounter{defcounteri}{0}
\newenvironment{BihEq}{%
\addtocounter{equation}{-1}
\refstepcounter{defcounter}
\renewcommand\theequation{BH\thedefcounter}
\begin{equation}}
{\end{equation}}
\newenvironment{BicEq}{%
\addtocounter{equation}{-1}
\refstepcounter{defcounteri}
\renewcommand\theequation{BC\thedefcounteri}
\begin{equation}}
{\end{equation}}

\newcommand{\diag}{\mathrm{diag}\ }
\newcommand{\DIV}{\mathrm{div}}
\title{On biconservative hypersurfaces in 4-dimensional Riemannian space forms}
\author{ Nurettin Cenk Turgay \footnote{Istanbul Technical University, Faculty of Science and Letters, Department of  Mathematics, 34469 Maslak, Istanbul, Turkey, e-mail: turgayn@itu.edu.tr} and Abhitosh Upadhyay \footnote{ Harish Chandra Research Institute, Chhatnag Road, Jhunsi, Allahabad, 211019, India, e-mail: abhi.basti.ipu@gmail.com, abhitoshupadhyay@hri.res.in}}
\maketitle
\begin{abstract}
In this paper, we study biconservative hypersurfaces in $\mathbb S^{n}$ and $\mathbb H^{n}$. Further, we obtain complete explicit classification of biconservative hypersurfaces in $4$-dimensional Riemannian space form with exactly three distinct principal curvatures.

\textbf{Keywords.} Null 2-type submanifolds, biharmonic submanifolds, biconservative hypersurfaces, Riemannian space forms
\end{abstract}

\section{Introduction}
Let $(M^{m},g)$ and $(N^{n},h)$ be some Riemannian manifolds. Then, the energy functional $E$ is defined by 
$$E(\psi) = \frac{1}{2}\int_M|d\psi|^{2}dv$$
for any smooth mapping $\psi: M\rightarrow N$, where $d\psi$ denotes the differential of $\psi$ and $dv$ stands for the volume element of $g$. A mapping  $\psi$ is said to be harmonic if it is a critical point of the energy functional $E$. It is well known that a harmonic mapping $\psi$ satisfy the Euler-Lagrange equation 
$$\tau(\psi) = 0,$$ where $\tau(\psi)=\mathrm{tr}\nabla d\psi $ is the tension field of $\psi$ (See for example, \cite{EellsLemaire1983}). 

In 1964, Eells and Sampson proposed an infinite dimensional Morse theory on the manifold of smooth maps between Riemannian manifolds whereas their results describe harmonic maps more rigorously \cite{EellsSampson1964}. Further, J. Eells and L. Lemaire proposed the problem to consider the $k$-harmonic maps in \cite{EellsLemaire1983}. A particular interest has the case $k = 2$. The bienergy functional is defined by
\begin{center}
$E_2(\psi) = \int_M|\tau_1(\psi)|^{2}dv$
\end{center} 
for a smooth mapping  $\psi: M\rightarrow N$. The study of bienergy plays a very important role not only in elasticity and hydrodynamics, but also it can be seen as the next stage where the theory of harmonic maps fail. For example, Eells and Wood showed in \cite{EellsWood} that in case of 2-torus $T^{2}$ and the 2-sphere $S^{2}$, there exists no harmonic map from $T^{2}$ to $S^{2}$, whatever the metrics chosen in the homotopy classes of Brower degrees $\pm 1$, but in case of biharmonicity, the situation is completely different.

Biharmonic maps are a natural generalization of harmonic maps. A map $\psi$ is called {\it biharmonic} if it is a critical point of the bi-energy functional $E_2$. 
 In \cite{Ji}, G.Y. Jiang studied the first and second variation formulas of $E_{2}$ for which critical points are called biharmonic maps.  The Euler-Lagrange equation associated with this bi-energy functional is $$\tau_2(\psi)=0,$$ where
$\tau_2(\psi)=\Delta\tau(\psi)-tr\tilde R(d\psi,\tau(\psi))d\psi $
is the bi-tension field and $\Delta$ is the rough Laplacian acting on the sections of $\psi^{-1}(TN)$.  A harmonic map is obviously a biharmonic map. Because of this reason, a non-harmonic biharmonic maps is said to be a proper biharmonic map. Note that one can easily construct a  proper biharmonic map, by choosing a third order polynomial mapping between Euclidean spaces, since, in this situation, the biharmonic operator is nothing but the Laplacian composed with itself.

 In the last decades, Biharmonic submanifolds has become a popular subject of research with many significant progresses made by geometers around the world. One of the fundamental problems in the study of biharmonic submanifolds is to classify such submanifolds in a model space. So far, most of the work done has been focused on classification of biharmonic submanifolds of space forms.

The {\em stress-energy} tensor, described by Hilbert \cite{Hi}, is a 
symmetric $2$-covariant tensor $S$ associated to a variational 
problem that is conservative at the critical points. Such tensor was 
employed by Baird-Eells \cite{BE} in the study of harmonic maps.
In this context, it is given by
$$S=\frac{1}{2}|d\psi|^2g -\psi^\ast h,$$
and it satisfies 
$$div S=-<\tau(\psi),d \psi>.$$
Therefore, $div S=0$ when $\psi$ is harmonic. The study of stress energy tensor, in the context of biharmonic maps,  was initiated by Jiang in \cite{Ji1} and afterwards developed by Loubeau, Montaldo and Onicuic in \cite{ELSMCO20141}. It is given by
\begin{eqnarray*}
S_2(X,Y) &=& \frac{1}{2}|\tau(\psi)|^2< X,Y> + 
<d\psi,\nabla\tau(\psi)>< X,Y> \\
&& -<X(\psi),\nabla_Y\tau(\psi)> - <Y(\psi),\nabla_X\tau(\psi)>,
\end{eqnarray*}
which satisfies
\begin{equation}\label{Biconservativestressenerytensor}
div S_2=-<\tau_2(\psi),d\psi>.
\end{equation}
This means that an isometric immersions with $div S_{2} = 0$ correspond to immersions with vanishing tangent part of the corresponding bitension field. If $\psi: M\rightarrow N$ is an isometric immersion, then equation (\ref{Biconservativestressenerytensor}) becomes

$$div S_2=-\tau_2(\psi)^{T}.$$
Now, we have the following:
\begin{Def}
Let $\psi:M\to N$ be an  isometric immersion  between two Riemannian 
manifolds. $\psi$  is called {\em biconservative} if its
stress-energy tensor $S_2$ is conservative, i.e., $\tau_2(\psi)^T=0$, where 
$\tau_2(\psi)$ is the bitension field of $\psi$.
\end{Def}
The class of biconservative submanifolds includes that of biharmonic submanifolds, which have been of large interest in the last decade \cite{CMOP, FetcuOniciucPinheiro, Fu2013MinkBih, YuFu2015, ELSMCO20141, SMCOAR, SMCOAR1, Sasahara}. It is well known that $\psi$ is biconservative if and only if 
\begin{BicEq}\label{BiconservativeEquationMostGeneral}
m\nabla\|H\|^2 + 4\mathrm{trace } S_{\nabla^\perp_{\cdot}H}(\cdot) + 
4\mathrm{trace }\big(\tilde R(\cdot,H)\cdot\big)^T=0,
\end{BicEq}where $H$, $\nabla^\perp$, $S$ are the mean curvature, the normal connection, the shape operator of $M$, respectively and  $\tilde R$ is the curvature tensor of $N$. 

In 1995, Hasanis and Vlachos initiated to study the biconservative hypersurfaces in the Euclidean space $\mathbb{R}^{n}$ and classified it in $\mathbb{R}^{3}$ and $\mathbb{R}^{4}$ \cite{HsanisField}. In that paper, authors called biconservative hypersurfaces as $\mathbb{H}$-hypersurfaces. In \cite{CMOP} and \cite{HsanisField}, the authors have classified proper biconservative surfaces in $\mathbb{R}^{3}$ and proved that they must be of surface of revolution. Further, Chen and Munteanu studied biconservative ideal hypersurfaces in Euclidean spaces $\mathbb{E}^{n} (n\geq 3)$ proving that they are either minimal or spherical hypercylinder \cite{chenMunteanu2013}. Recent results in the field of biconservative submanifolds were obtained, for example, in \cite{FetcuOniciucPinheiro, YuFu2015, YuFuTurgay, Sasahara,TurgayHHypersurface}. In \cite{TurgayHHypersurface}, first author studied biconservative hypersurfaces with diagonalizable shape operators in Euclidean spaces with exactly three distinct principal curvatures. Further, Yu Fu and Turgay obtained the complete classification of biconservative hypersurfaces in 4-dimensional minkowaski space with diagonalizable shape operators \cite{YuFuTurgay}. Furthermore, in \cite{AbhNCTJMAA} authors extended their study to biconservative hypersurfaces of $\mathbb{E}^{5}_{2}$ and obtained some classification results in this direction. Now, the natural question arises: Can we also classify all biconservative hypersurfaces in $\mathbb{S}^{n}$ or $\mathbb{H}^{n}$. In this paper, authors tried to classify all biconservative hypersurfaces in $\mathbb{S}^{4}$ and $\mathbb{H}^{4}$.

Our paper is organized as follows. In Section 1, we have presented a brief introduction of the previous work which has been done in this direction. In Section 2, we have collected the formulae and information which are useful
in our subsequent sections. In Section 3, we obtain some results for biconservative hypersurfaces in Riemannian space forms of arbitrary dimension. In particular, we get the form of position vector of a biconservative hypersurface in Section 3.1 without considering any restriction (See Theorem \ref{TheoremLocalClassForAnyBicHypr}) and further in Section 3.2, we focus on biconservative hypersurfaces with three distinct principal curvatures. Finally, in Section 4,  we study biconservative hypersurfaces in  $\mathbb{S}^{4}$  and $\mathbb{H}^{4}$ (See Theorems \ref{Classificationtheoremsphere} and \ref{Classificationtheoremhypr}).

The hypersurfaces which we are dealing are smooth and connected unless otherwise stated.

\section{Prelimineries}

Let $\mathbb E^m_s$ denote the semi-Euclidean $m$-space with the canonical 
Euclidean metric tensor of index $s$ given by  
$$
 g=\langle \cdot, \cdot \rangle=-\sum\limits_{i=1}^s dx_i^2+\sum\limits_{j=s+1}^m dx_j^2,
$$
where $(x_1, x_2, \hdots, x_m)$  is a rectangular coordinate system in $\mathbb E^m_s$. We put 
\begin{eqnarray} 
\mathbb S^{n}(r^2)&=&\{x\in\mathbb E^{n+1}: \langle x, x \rangle=r^{-2}\},\notag
\\  
\mathbb H^{n}(-r^2)&=&\{x\in\mathbb E^{n+1}_1: \langle x, x \rangle=-r^{-2}\}.\notag
\end{eqnarray}
These complete Riemannian manifolds, which have constant sectional curvatures, are called Riemannian space forms. We use the following notation
$$R^n(c)=\left\{\begin{array}{rcl} \mathbb S^{n}(c) &\mbox{if}& c>0\\ \mathbb H^{n} (c) &\mbox{if}& c<0 \end{array}\right.,\quad E(n,c)=\left\{\begin{array}{rcl} \mathbb E^{n+1} &\mbox{if}& c>0\\\mathbb E^{n+1}_{1} &\mbox{if}& c<0 \end{array}\right.$$
from which we see $R^n(c)\subset E(n,c)$. 

\subsection{Hypersurfaces in $R^{n+1}(c)$}
Consider an oriented hypersurface $M$ in $R^{n+1}(c)$ with unit normal vector field $N$. We denote Levi-Civita connections of $E(n+1,c)$, $R^{n+1}(c)$ and $M$ by $\hat{\nabla}$, $\widetilde{\nabla}$ and $\nabla$, respectively. Then, the Gauss and Weingarten formulas are given, respectively, by
\begin{eqnarray}
\label{MEtomGauss} \widetilde\nabla_X Y&=& \nabla_X Y + h(X,Y),\\
\label{MEtomWeingarten} \widetilde\nabla_X N&=& -S(X)
\end{eqnarray}
for all tangent vectors fields $X,\ Y\in M$, where $h$ and $S$ are the second fundamental form  and the shape operator of $M$, respectively. The Gauss and Codazzi equations are given, respectively, by
\begin{eqnarray}
\label{RnGaussEquation}\label{GaussEq} R(X,Y,Z,W)&=&c\Big(\langle Y,Z\rangle \langle X,W\rangle -\langle X,Z\rangle \langle Y,W\rangle \Big)+ \langle h(Y,Z),h(X,W)\rangle\\\nonumber&&
-\langle h(X,Z),h(Y,W)\rangle,\\
\label{MinkCodazzi} (\bar \nabla_X h )(Y,Z)&=&(\bar \nabla_Y h )(X,Z),
\end{eqnarray}
where  $R$ is the curvature tensor associated with connection $\nabla$ and  $\bar \nabla h$ is defined by
$$(\bar \nabla_X h)(Y,Z)=\nabla^\perp_X h(Y,Z)-h(\nabla_X Y,Z)-h(Y,\nabla_X Z).$$

Let $\{e_1,e_2,\hdots,e_n\}$ be a local orthornomal base field of the tangent bundle of $M$ consisting of  principal directions of $M$ with corresponding principal curvatures $k_1,\ k_2,\hdots,\ k_n$. Then, the second fundamental form of $M$ becomes
$$h(e_i,e_j)=\delta{ij}k_iN.$$
On the other hand, we denote the connection forms corresponding to  this frame field by $\omega_{ij}$, i.e., $\omega_{ij}(e_l)=\langle\nabla_{e_l}e_i,e_j\rangle$.  Note that we have $\omega_{ij}=-\omega_{ji}$. Thus, the Levi-Civita connection of $M$ satisfies
$$\nabla_{e_i}e_j=\sum\limits_k\omega_{jk}(e_i)e_k.$$
From the Codazzi equation \eqref{MinkCodazzi}, we have
\begin{subequations}\label{CodazziEq1All}
\begin{eqnarray}
\label{CodazziEq1a}e_i(k_j)=\omega_{ij}(e_j)(k_i-k_j),\\
\label{CodazziEq1b}\omega_{ij}(e_l)(k_i-k_j)=\omega_{il}(e_j)(k_i-k_l)
\end{eqnarray}
\end{subequations}
whenever $i,j,l$ are distinct.

 Let $\psi:M^n\rightarrow\mathbb R^{n+1}(c)$ be an isometric immersion and $i$ denote the canonical inclusion map. Then, we have
$$i:R^{n+1}(c)\rightarrow E(n+1,c) \hspace{.2 cm}and\hspace{.2 cm} x=i\circ \psi:M\rightarrow E(n+1,c).$$
\begin{Remark}\label{RemarkForSecondFundementalForms}
Put  $h$ and $h^*$ for  the second fundamental form of $\psi$ and $x$, respectively. It implies that
$$h^*(i_*X,i_*Y)=h(X,Y)-c\langle X,Y\rangle x,$$
if $X,Y$ are two vector field tangent to $M$ in $R^{n+1}(c)$. 
\end{Remark}

\section{Biconservative Hypersurfaces in $R^{n+1}(c)$}
In this section, we consider biconservative hypersurfaces in $R^{n+1}(c)$ for $c\in\{-1,+1\}$. The similar computation has been made for $\mathbb E^{n+1}$, $\mathbb E^{n+1}_1$ and $\mathbb E^{n+1}_2$ in some papers \cite{YuFuTurgay, TurgayHHypersurface, AbhNCTJMAA}.

Let $\psi:M\rightarrow\mathbb R^{n+1}(c)$ be isometric immersion, where $M$ is a hypersurface of $R^{n+1}(c)$. Then, by a direct computation using   \eqref{BiconservativeEquationMostGeneral}, we see that $\psi$ is biconservative if and only if
\begin{BicEq}\label{BiconservativeEquationinRn1}
S(\nabla H)+ \frac {nH}2\nabla H=0,
\end{BicEq}
where $S$ is the shape operator of $M$. Here, $M$ is called a biconservative hypersurface.

\begin{Remark}\label{FirstRemark}
We note that \eqref{BiconservativeEquationinRn1} is satisfied trivially if $H$ is constant. Therefore, we will locally assume that $\nabla H$ does not vanish. 
\end{Remark}
Let $M$ is a biconservative hypersurface and consider $e_1=\nabla H/|\nabla H|$. Therefore,  equation \eqref{BiconservativeEquationinRn1} implies that $k_1=-\frac {nH}2$. 
As $e_1$ is propotional to $\nabla H$, we have 
\begin{subequations}\label{BicHyperEq1}
\begin{equation}\label{BicHyperEq1a}
e_2(H)=\cdots=e_n(H)=0.
\end{equation}
Further, Remark \ref{FirstRemark} yields $e_1(H)\neq 0$ and locally we can suppose $H\neq0$. Therefore, by replacing $e_1$ with $-e_1$ and/or $N$ with $-N$ if necessary, we also assume
\begin{equation}\label{BicHyperEq1b}
e_1(H)>0\quad\mbox{ and }\quad H>0.
\end{equation}
\end{subequations}

\begin{Remark}
If the algebraic multiplicity of $k_1$ is more than 1, i.e., $k_1=k_A$ for some $A$, then  the Codazzi equation \eqref{CodazziEq1a} for $i=1,\ j=A$ gives $e_1(k_A)=0$, which contradicts to equation \eqref{BicHyperEq1}. Therefore, the function $k_1-k_A$ does not vanish for each $A$.
\end{Remark}
Since $\sum\limits_i k_i=nH$ and $k_1=-\frac {nH}2$, we have
\begin{equation}\label{BicEqMod2}
3k_1+k_2+\cdots+k_n=0.
\end{equation}
By considering  equation \eqref{BicHyperEq1a} and the Codazzi equation \eqref{CodazziEq1a}, we obtain
\begin{equation}\label{BicHyperEq2}
\omega_{1A}(e_1)=0,\quad i=A,\quad j=1, A>1.
\end{equation}
Further, taking into account $[e_A,e_B](k_1)=0$ and the Codazzi equation \eqref{CodazziEq1b}, we get
\begin{subequations}\label{BicHyperEq3ALL}
\begin{equation}\label{BicHyperEq3a}
\omega_{1A}(e_B)=0,\quad\mbox{whenever $A\neq B$, $A,B>1$}
\end{equation}
and
\begin{equation}\label{BicHyperEq3b}
\omega_{AB}(e_1)=0,\quad\mbox{ whenever $k_A\neq k_B$.}
\end{equation}
\end{subequations}
The Gauss equation \eqref{RnGaussEquation} for $X=Z=e_A,Y=W=e_1$ gives
\begin{equation}\label{GaussEq4}
e_1(\omega_{1A}(e_A))=-2c-k_1k_A-(\omega_{1A}(e_A))^2.
\end{equation}


\subsection{A local parametrization for biconservative hypersurfaces in $R^{n+1}(c)$}
The aim of this subsection is to obtain a local parametrization for biconservative hypersurfaces in $R^n(c)$.

We will use the following three lemmas in the next section. It is to note that in the proofs of these lemmas $\gamma=\gamma(s)$ denote an integral curve of $e_1$, i.e., $e_1\left|_{\gamma(s)}\right.=\gamma'(s)$, $H(s)=H\circ\gamma$, $N(s)=N\left|_{\gamma(s)}\right.$,  $e_A(s)=e_A\left|_{\gamma(s)}\right.$, $T(s)=\gamma'(s)$, $y=x\circ\gamma$ and $(\hat\nabla_T\zeta)\left|_{\gamma(s)}\right.=\zeta'(s)$ for any vector field $\zeta$ along $\gamma$, where $A=2,3,\hdots,n$.

\begin{Lemma}\label{Intcurves_e1Lem1}
Let $M$ be a  biconservative hypersurface in $R^{n+1}(c)$ and $e_1=\frac{\nabla H}{|\nabla H|}$, where $H$ is the mean curvature of $M$ and $c\in\{-1,1\}$. Then, any integral curve $\gamma$ of $e_1$ lies on 2-dimensional totally geodesic submanifold $R^2(c)$ of $R^{n+1}(c)$ and its curvature $\kappa_R$ in $R^2(c)$ is 
\begin{eqnarray}
\label{IntCurvEq1}\kappa_S&=&\frac{-nH}2.
\end{eqnarray}
\end{Lemma}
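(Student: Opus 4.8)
The plan is to pass to the flat ambient space $E(n+1,c)$ and derive a closed first-order system for the moving frame $\{y,T,N\}$ along $\gamma$; this system will force $\gamma$ into a fixed $3$-dimensional linear subspace, which cuts out the totally geodesic $R^2(c)$.

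First I would differentiate $y=x\circ\gamma$, $T=\gamma'$ and $N$ with respect to the flat connection $\hat\nabla$ of $E(n+1,c)$. Clearly $y'=T$. For $T'$ I combine the Gauss formula \eqref{MEtomGauss} for $M\subset R^{n+1}(c)$ with Remark \ref{RemarkForSecondFundementalForms}, using that $\nabla_{e_1}e_1=0$ (which follows from $\omega_{11}=0$ together with \eqref{BicHyperEq2}), to obtain
$$T'=\hat\nabla_T T=\nabla_{e_1}e_1+h(e_1,e_1)-c\langle e_1,e_1\rangle\,y=k_1N-cy.$$
For $N'$ I write $\hat\nabla_T N=\widetilde\nabla_T N-c\langle T,N\rangle\,y$ (the relation between the connections of $R^{n+1}(c)$ and $E(n+1,c)$ underlying Remark \ref{RemarkForSecondFundementalForms}) and apply the Weingarten formula \eqref{MEtomWeingarten} with $\langle T,N\rangle=0$, giving
$$N'=\widetilde\nabla_T N=-S(e_1)=-k_1T.$$
Since $k_1=-\tfrac{nH}{2}$, the right-hand sides of $y'$, $T'$, $N'$ all lie in $\mathrm{span}\{y,T,N\}$.

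Next I would exploit this invariance. Fixing a base point $s_0$, set $V:=\mathrm{span}\{y(s_0),T(s_0),N(s_0)\}$ and take any constant vector $a\in E(n+1,c)$ orthogonal to $V$. The scalars $\langle y,a\rangle$, $\langle T,a\rangle$, $\langle N,a\rangle$ then satisfy a homogeneous linear ODE system with zero initial data, so by uniqueness they vanish identically; hence $y(s),T(s),N(s)\in V$ for all $s$ and $\gamma\subset R^{n+1}(c)\cap V$. Because $y,T,N$ are mutually orthogonal with $\langle y,y\rangle=c^{-1}\neq0$ and $\langle T,T\rangle=\langle N,N\rangle=1$, the subspace $V$ is nondegenerate of the appropriate signature, so $R^{n+1}(c)\cap V$ is a $2$-dimensional totally geodesic $R^2(c)$.

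Finally, to read off the curvature I compute the acceleration inside $R^{n+1}(c)$: by the Gauss formula and $\nabla_{e_1}e_1=0$,
$$\widetilde\nabla_T T=\nabla_{e_1}e_1+h(e_1,e_1)=k_1N=-\tfrac{nH}{2}\,N.$$
As $R^2(c)$ is totally geodesic this is also the covariant acceleration of $\gamma$ in $R^2(c)$, and with $N$ playing the role of the unit normal of $\gamma$ there, the signed curvature is $\kappa_S=k_1=-\tfrac{nH}{2}$. I expect the main obstacle to be the bookkeeping in the hyperbolic case $c<0$: one must check that $V$ carries Lorentzian signature $(1,2)$ — guaranteed by $y$ being timelike and $T,N$ spacelike — so that the intersection is a genuine copy of $\mathbb H^2$ rather than a degenerate slice, whereas in the spherical case $V$ is automatically a Euclidean $3$-space giving $\mathbb S^2$.
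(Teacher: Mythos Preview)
Your proof is correct and follows essentially the same idea as the paper: you show that the frame along $\gamma$ satisfies a closed first-order system, forcing $\gamma$ into a totally geodesic $R^2(c)$, and then read off $\kappa_S=k_1=-\tfrac{nH}{2}$ from $\widetilde\nabla_T T=k_1N$. The only cosmetic difference is that the paper argues intrinsically in $R^{n+1}(c)$ with the two-vector frame $\{T,N\}$ and the connection $\widetilde\nabla$, whereas you pass to the flat ambient $E(n+1,c)$ and use the three-vector frame $\{y,T,N\}$ under $\hat\nabla$ --- exactly the viewpoint the paper itself adopts in the two lemmas that follow (Lemmas~\ref{Intcurves_e1Lem2} and \ref{Intcurves_e1Lem2Hn}); your added details (the ODE uniqueness argument and the signature check on $V$) make explicit what the paper leaves implicit.
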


\begin{proof}
From equation \eqref{BicHyperEq2}, we have
\begin{align}\nonumber
\begin{split}
\widetilde\nabla_{T(s)}T(s)=-\frac{nH(s)}2N(s),\\
\widetilde\nabla_{T(s)}N(s)=\frac{nH(s)}2N(s).
\end{split}
\end{align}
Therefore, $\gamma$ lies on 2-dimensional totally geodesic submanifold of $R^n(c)$ with spherical curvature given in \eqref{IntCurvEq1}. 
\end{proof}


\begin{Lemma}\label{Intcurves_e1Lem2}
Let $M$ be a  biconservative hypersurface in $\mathbb S^{n+1}$ with $e_1=\frac{\nabla H}{|\nabla H|}$ and $\gamma$ is an integral curve of $e_1$ passing through $m\in M$, where $H$ is the mean curvature of $M$. Then, $\gamma$ lies on a $3$-plane  of $\mathbb E^{n+2}$ spanned by $e_1\left|_{m}\right.$, $N\left|_{m}\right.$ and $x(m)$. Further, the curvature $\kappa$ and torsion $ \tau$ of $\gamma$ are given by
\begin{subequations}\label{IntCurvEq3ALL}
\begin{eqnarray}
\label{IntCurvEq3a}\kappa&=&\left.\sqrt{1+\frac{1}{4} n^2 H^2}\right|_\gamma,\\
\label{IntCurvEq3b}\tau&=&\left.\frac{ 2n e_1(H)}{4+n^2H^2}\right|_\gamma.
\end{eqnarray}
\end{subequations}
\end{Lemma}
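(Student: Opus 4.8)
The plan is to carry out all computations in the ambient Euclidean space $\mathbb E^{n+2}=E(n+1,1)$ and read off the Frenet apparatus of $\gamma$ there directly. Since $c=1$, the position vector $x$ is the outward unit normal of $\mathbb S^{n+1}$ in $\mathbb E^{n+2}$, so that $\hat\nabla_T x=T$, and Remark \ref{RemarkForSecondFundementalForms} with $c=1$ supplies the Gauss formula $\hat\nabla_U V=\widetilde\nabla_U V-\langle U,V\rangle x$ for vector fields $U,V$ tangent to $\mathbb S^{n+1}$. First I would record the two relations along $\gamma$ that drive everything: the Weingarten formula \eqref{MEtomWeingarten} together with $k_1=-nH/2$ gives $\widetilde\nabla_T N=\tfrac{nH}{2}T$, while the proof of Lemma \ref{Intcurves_e1Lem1} gives $\widetilde\nabla_T T=-\tfrac{nH}{2}N$.

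Next I would differentiate three times in $\mathbb E^{n+2}$. Using $\langle T,T\rangle=1$ and $\langle T,N\rangle=0$, the Gauss formula yields $\gamma'=T$, then $\gamma''=\hat\nabla_T T=-\tfrac{nH}{2}N-x$, and finally $\gamma'''=\hat\nabla_T\gamma''=-\bigl(1+\tfrac{n^2H^2}{4}\bigr)T-\tfrac{n\,e_1(H)}{2}N$, where I use $T(H)=e_1(H)$ and $\hat\nabla_T N=\tfrac{nH}{2}T$. Because $T$, $N$ and $x$ are mutually orthogonal unit vectors (tangent to $M$, normal to $M$ inside the sphere, and normal to the sphere, respectively), reading off $\gamma''$ immediately gives $\kappa=|\gamma''|=\sqrt{1+\tfrac14 n^2H^2}$, which is \eqref{IntCurvEq3a}.

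To establish that $\gamma$ lies on a fixed $3$-plane, I would observe that the $\mathbb E^{n+2}$-derivatives of the three fields $T,N,x$ along $\gamma$, namely $T'=-\tfrac{nH}{2}N-x$, $N'=\tfrac{nH}{2}T$ and $x'=T$, all lie in $\mathrm{span}\{T,N,x\}$. Setting $W_0=\mathrm{span}\{e_1|_m,N|_m,x(m)\}$ and taking any constant $v\in W_0^\perp$, the three functions $\langle T,v\rangle,\langle N,v\rangle,\langle x,v\rangle$ satisfy a homogeneous linear first-order ODE system with vanishing initial data at $m$, hence vanish identically; thus $T(s),N(s),x(s)\in W_0$ for all $s$. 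Since $x(s)=\gamma(s)$ is the position vector, this proves $\gamma\subset W_0$ and identifies the $3$-plane as claimed.

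Finally, for the torsion I would use that $\{T,N,x\}$ is a moving orthonormal basis of the fixed $3$-space $W_0$, so that $\tau=\det(\gamma',\gamma'',\gamma''')/\kappa^2$ with the determinant computed in this oriented frame. Expanding $\gamma'$, $\gamma''$, $\gamma'''$ in the basis $(T,N,x)$ reduces $\tau$ to a $3\times3$ determinant equal to $\pm\tfrac{n\,e_1(H)}{2}$, whence $\tau=\tfrac{2n\,e_1(H)}{4+n^2H^2}$ after dividing by $\kappa^2=\tfrac{4+n^2H^2}{4}$, which is \eqref{IntCurvEq3b}. The one point that requires genuine care is the invariance of the $3$-plane in the previous paragraph, since the remaining derivative computations are routine once the Gauss formula is in hand; a secondary bookkeeping point is fixing the orientation of $W_0$ (recall $e_1(H)>0$ by \eqref{BicHyperEq1b}) so that the torsion carries the stated sign.
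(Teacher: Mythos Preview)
Your proof is correct and follows essentially the same route as the paper: both arguments pass to the ambient $\mathbb E^{n+2}$, record the closed first-order system $T'=-\tfrac{nH}{2}N-y$, $N'=\tfrac{nH}{2}T$, $y'=T$ along $\gamma$, and read off the Frenet data from it. The only cosmetic difference is that the paper writes down the Frenet normal and binormal $n,b$ explicitly as linear combinations of $N$ and $y$ and then verifies the Frenet--Serret equations, whereas you compute $\gamma',\gamma'',\gamma'''$ and use the determinant formula $\tau=\det(\gamma',\gamma'',\gamma''')/\kappa^2$; your separate ODE argument for the invariance of the $3$-plane is likewise equivalent to what the paper gets for free once the Frenet frame is in hand.
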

\begin{proof}
Using the notation described above and considering \eqref{BicHyperEq2},  we have
\begin{align}\nonumber
\begin{split}
\quad T'(s)=&-\frac{nH(s)}2N(s)-y(s),\\
\quad N'(s)=&\frac{nH(s)}2N(s),\\
y'(s)=&T(s)
\end{split}
\end{align}
By a direct computation using these equations and \eqref{BicHyperEq1b}, we obtain the usual Frenet-Serret formula 
$$T'=\kappa n,\qquad n'=-\kappa T+c\tau b,\qquad b'=-\tau n $$
with curvature $\kappa$ and  torsion $\tau$ given in \eqref{IntCurvEq3a}, \eqref{IntCurvEq3b}, respectively, where the normal and binormal vector fields $n$ and $b$ are given by
\begin{align}\label{NormBinormSn}
\begin{split}
n(s)=&-\frac{n H(s)}{\sqrt{4 +n^2 H(s)^2}} N(s)-\frac{2 }{\sqrt{4 +n^2 H(s)^2}}y(s),\\
b(s)=& \frac{-2}{\sqrt{4+n^2H(s)^2}} N(s) +  \frac{ n H(s)}{\sqrt{4+n^2H(s)^2}} y(s).
\end{split}
\end{align}
Consequently, $M$ lies on a 3-plane of $\mathbb E^{n+2}$.
\end{proof}

Similarly, we have
\begin{Lemma}\label{Intcurves_e1Lem2Hn}
Let $M$ be a  biconservative hypersurface in $\mathbb H^{n+1}$ with $e_1=\frac{\nabla H}{|\nabla H|}$ and $\gamma$ is an integral curve of $e_1$ passing through $m\in M$ with $H(m)\neq \frac{2}{n}$, where $H$ is the mean curvature of $M$. Then, $\gamma$ lies on a time-like $3$-plane of $\mathbb E^{n+2}_1$ spanned by $e_1\left|_{m}\right.$, $N\left|_{m}\right.$ and $x(m)$. Further, on an open part of $\gamma$ containing $m$, its curvature $\kappa$ and torsion $\tau$ is given by
\begin{subequations}\label{IntCurvEq4ALL}
\begin{eqnarray}
\label{IntCurvEq4a}\kappa&=&\frac{1}{2}\left.\sqrt{| n^2 H^2-4|}\right|_\gamma,\\
\label{IntCurvEq4b}\tau&=&\frac{2 n e_1(H)}{|H^2 n^2-4|}.
\end{eqnarray}
\end{subequations}
\end{Lemma}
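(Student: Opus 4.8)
The plan is to mirror the proof of Lemma \ref{Intcurves_e1Lem2} almost verbatim, changing only the ambient signature and the sign of the inner product $\langle x,x\rangle$. First I would set up the same differential system along the integral curve $\gamma$. In $\mathbb{H}^{n+1}\subset\mathbb{E}^{n+2}_1$ the position vector satisfies $\langle x,x\rangle=-1$ (taking $r=1$), so $\hat\nabla_T y=T$ exactly as before, while the Gauss and Weingarten formulas together with Lemma \ref{Intcurves_e1Lem1} and equation \eqref{BicHyperEq2} give $N'(s)=\frac{nH(s)}{2}T(s)$ and $T'(s)=-\frac{nH(s)}{2}N(s)+c\,y(s)$, where now $c=-1$. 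Thus the closed first-order system reads
\begin{align}\nonumber
\begin{split}
T'(s)=&-\frac{nH(s)}2N(s)+y(s),\\
N'(s)=&\frac{nH(s)}2T(s),\\
y'(s)=&T(s),
\end{split}
\end{align}
with the sign in the first line reflecting $c=-1$. Since the right-hand sides are linear combinations of $T$, $N$ and $y$, the span of these three vectors is parallel along $\gamma$ with respect to $\hat\nabla$, so $\gamma$ stays in the fixed $3$-plane $\mathrm{span}\{e_1|_m,N|_m,x(m)\}$ of $\mathbb{E}^{n+2}_1$, which is time-like because $x(m)$ is a unit time-like vector orthogonal to the space-like vectors $e_1|_m$ and $N|_m$.

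Next I would extract the Frenet data. The ambient metric on this $3$-plane has signature $(-,+,+)$, so $\kappa^2=\langle T',T'\rangle$ (with the ambient sign conventions) and the torsion comes from differentiating the unit normal $n$. Computing $\langle T',T'\rangle$ from the system gives $\frac{n^2H^2}{4}\langle N,N\rangle+\langle y,y\rangle=\frac{n^2H^2}{4}-1$, whence $\kappa=\frac12\sqrt{|n^2H^2-4|}$ as claimed in \eqref{IntCurvEq4a}; the absolute value appears precisely because the quantity $\frac{n^2H^2}{4}-1$ changes sign, which is why the hypothesis $H(m)\neq\frac2n$ is imposed and the conclusion is stated only on an open part of $\gamma$ near $m$. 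The torsion formula \eqref{IntCurvEq4b} then follows by normalizing the Frenet frame and differentiating, using $e_1(H)=H'(s)>0$ from \eqref{BicHyperEq1b}; the factor $|H^2n^2-4|$ in the denominator is $4\kappa^2$, matching the structure of \eqref{IntCurvEq3b} in the spherical case up to the replacement $4+n^2H^2\mapsto|n^2H^2-4|$.

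The main obstacle here is bookkeeping of signs in the Lorentzian setting rather than any genuinely new idea: one must be careful that $\langle y,y\rangle=-1$ (not $+1$), that $N$ and $T$ remain unit space-like so the curvature is the positive square root of a quantity that may be negative, and that the Frenet-Serret equations on a time-like $3$-plane carry the indefinite-metric signs correctly. The hypothesis $H(m)\neq\frac2n$ is exactly the condition ensuring $\kappa\neq0$ at $m$, so that the Frenet frame is well-defined on a neighborhood and $\gamma$ is a genuine (non-geodesic) curve there; without it the curve could be locally geodesic in the $3$-plane and the torsion expression would degenerate. Since everything is a formal transcription of the previous lemma with $c=-1$ and the Lorentzian inner product, I expect the argument to go through with the sign adjustments noted above and the Frenet formulas stated as $T'=\kappa n$, $n'=-\kappa T+c\tau b$, $b'=-\tau n$ with $c=-1$.
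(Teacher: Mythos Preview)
Your proposal is correct and follows essentially the same route as the paper: set up the closed first-order system for $T,N,y$ (with the sign flip coming from $c=-1$), conclude that $\gamma$ stays in the $3$-plane spanned by $e_1|_m,N|_m,x(m)$, compute $\langle T',T'\rangle=\frac{n^2H^2}{4}-1$ to read off $\kappa$, and then obtain $\tau$ by differentiating the normalized frame. The paper additionally establishes the $3$-plane claim by observing $e_A'(s)=0$ (via \eqref{BicHyperEq2} and \eqref{BicHyperEq3a}), but your argument from the closure of the ODE system is equally valid and is in fact the method used in the spherical Lemma~\ref{Intcurves_e1Lem2}. One small refinement: rather than writing the Frenet equations with a global ``$c=-1$'', the paper splits into two subcases according to whether the principal normal $n$ is space-like or time-like (equivalently, whether $nH>2$ or $nH<2$ near $m$) and encodes this with a sign $\varepsilon=\pm1$ in the explicit formulas for $n$ and $b$; you should do the same, since the Frenet sign pattern in the Lorentzian $3$-plane is governed by the causal character of $n$, not by the ambient curvature.
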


\begin{proof}
From equations \eqref{BicHyperEq2} and \eqref{BicHyperEq3a}, we have  $e_A'(s)=0$, $A=2,3,\hdots,n$. Thus, the vector fields $e_2,\hdots,e_n$ are constant along $\gamma$. Also, we have $\langle T(s),e_A(s)\rangle=0$. Hence, $M$ lies on a time-like 3-plane of $\mathbb E^{n+2}$.

Similar to the proof of Lemma \ref{Intcurves_e1Lem2}, we have 
\begin{align}\nonumber
\begin{split}
\quad T'(s)=&-\frac{nH(s)}2N(s)+y(s),\\
\quad N'(s)=&\frac{nH(s)}2N(s),\\
y'(s)=&T(s).
\end{split}
\end{align}
Now, we have $\langle T',T'\rangle=-1 + \frac{1}{4 n^2 H(s)^2}$. Since $\nabla H$ does not vanish by the assumption, the interior of the set $\{m\in\gamma|H(m)=2/n\}|_\gamma$ is empty. Thus, we have either $H(m)>2/n$ or $H(m)<2/n$ on a neighborhood of $m$. In both cases, we have the corresponding Frenet-Serret equations  obtained for curvature  and torsion given by \eqref{IntCurvEq4ALL} with normal and binormal vector fields  
\begin{align}\label{NormBinormHn}
\begin{split}
n(s)=&-\frac{n H(s)}{\sqrt{\varepsilon(-4 +n^2 H(s)^2)}} N(s)+\frac{2 }{\sqrt{\varepsilon(-4 +n^2 H(s)^2)}}y(s),\\
b(s)=& \frac{2\varepsilon}{\sqrt{\varepsilon(-4 +n^2 H(s)^2)}} N(s) -  \frac{ n\varepsilon H(s)}{\sqrt{\varepsilon(-4 +n^2 H(s)^2)}} y(s),
\end{split}
\end{align}
where $\varepsilon=1$ if $n$ is space-like and $\varepsilon=-1$ otherwise.
\end{proof}
Now, we have the following corollary where we consider the distribution
\begin{equation}\label{DefinitionofD}
D=\mathrm{span}\{e_2,e_3,\hdots,e_n\},
\end{equation}
which is integrable because of equation \eqref{BicHyperEq3a}.
\begin{Corol}\label{CorolIntcurves_e1}
Let $M$ be a  biconservative hypersurface in $R^n(c)$ and $e_1=\frac{\nabla H}{|\nabla H|}$, where $H$ is the mean curvature of $M$ and $c\in\{-1,1\}$. Consider an integral submanifold $D$ of $M$ and $m_1,m_2\in D$. Then the integral curves of $e_1$ passing through $m_1$ and $m_2$ are congruent.
\end{Corol}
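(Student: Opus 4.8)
The plan is to show that the integral curves of $e_1$ through any two points $m_1,m_2$ of a fixed integral submanifold of $D$ are congruent by exhibiting an explicit isometry of the ambient space $E(n+1,c)$ carrying one curve onto the other. The key observation is that, by Lemmas \ref{Intcurves_e1Lem1}, \ref{Intcurves_e1Lem2} and \ref{Intcurves_e1Lem2Hn}, every integral curve of $e_1$ satisfies the same Frenet--Serret system in which the curvature and torsion are expressed purely in terms of $H$ and $e_1(H)$ along the curve. Thus congruence will follow once I verify that the functions $H$ and $e_1(H)$ agree along the two curves after matching arc-length parametrizations.

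First I would fix an integral submanifold $\Sigma$ of the distribution $D$ and recall from \eqref{BicHyperEq1a} that $e_2(H)=\cdots=e_n(H)=0$, so $H$ is constant on $\Sigma$; say $H|_\Sigma\equiv h_0$. Parametrize the integral curves $\gamma_1,\gamma_2$ of $e_1$ through $m_1,m_2$ by arc length $s$ with $\gamma_j(0)=m_j$. Since $e_1=\nabla H/|\nabla H|$ and $|\nabla H|=e_1(H)$, the evolution of $H$ along each $\gamma_j$ is governed by the ODE $\frac{d}{ds}H(\gamma_j(s))=e_1(H)\big|_{\gamma_j(s)}$. The crucial step is to argue that $e_1(H)$ is itself a function of $H$ alone, independent of the particular curve. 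I would obtain this from the Gauss equation \eqref{GaussEq4} together with $k_1=-nH/2$: differentiating $k_1=-nH/2$ along $e_1$ and combining with the structure equations expresses $e_1(e_1(H))$ in terms of $H$ and $e_1(H)$, yielding a second-order autonomous ODE for $H(s)$ whose coefficients do not depend on the integral curve.

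Consequently both $H(\gamma_1(s))$ and $H(\gamma_2(s))$ solve the same second-order autonomous ODE with identical initial data $H(0)=h_0$ and $H'(0)=e_1(H)|_{m_j}$, where the latter is also forced to be the same constant on $\Sigma$ (this follows by differentiating the first-order relation for $e_1(H)=e_1(H)(H)$ and using $H|_\Sigma\equiv h_0$). By uniqueness for ODEs, $H(\gamma_1(s))=H(\gamma_2(s))$ for all $s$, and hence $e_1(H)|_{\gamma_1(s)}=e_1(H)|_{\gamma_2(s)}$. Feeding these equalities into the formulas \eqref{IntCurvEq1}, \eqref{IntCurvEq3ALL}, \eqref{IntCurvEq4ALL} shows that $\gamma_1$ and $\gamma_2$ have the same curvature and torsion as functions of arc length. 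By the fundamental theorem of curves in the space form $R^n(c)$ (equivalently, in $E(n+1,c)$ via the rigidity of Frenet curves with prescribed $\kappa,\tau$), there is an isometry of the ambient space mapping $\gamma_1$ onto $\gamma_2$, so the two curves are congruent.

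The main obstacle I anticipate is the second step: establishing rigorously that $e_1(H)$ depends only on $H$ and not on the chosen integral curve. This requires showing that the whole Frenet data is a curve-independent function of $H$, which in turn rests on the autonomous nature of the ODE extracted from \eqref{GaussEq4}. Care is needed because \eqref{GaussEq4} a priori involves the connection coefficient $\omega_{1A}(e_A)$, which could vary from leaf to leaf; one must argue, using \eqref{BicHyperEq1a}, \eqref{BicHyperEq2} and the relation $k_1=-nH/2$, that the relevant combination governing $e_1(e_1(H))$ reduces to an expression in $H$ and $e_1(H)$ alone. Once this autonomy is secured, the congruence is a routine application of ODE uniqueness and the rigidity of Frenet curves in space forms.
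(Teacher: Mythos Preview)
Your attempt to close the argument via an autonomous second--order ODE for $H$ does not go through, and the obstacle you flag in your last paragraph is fatal rather than merely technical. Equation \eqref{GaussEq4} governs $e_1(\omega_{1A}(e_A))$ in terms of $k_1$, $k_A$ and $\omega_{1A}(e_A)$; to extract a relation for $e_1(e_1(H))$ you would have to eliminate \emph{all} the remaining principal curvatures $k_A$ and connection coefficients $\omega_{1A}(e_A)$, and there is no mechanism for doing so when there are three or more distinct principal curvatures. Even granting a hypothetical autonomous ODE $H''=F(H,H')$, you would still need $e_1(H)\big|_{m_1}=e_1(H)\big|_{m_2}$ to match the initial data, and your justification of this (``differentiating the first-order relation $e_1(H)=e_1(H)(H)$'') presupposes exactly the first-order dependence you have not established; the reasoning is circular.

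The paper's route avoids all of this. From \eqref{BicHyperEq2} and \eqref{BicHyperEq3a} one checks that the $e_1$-component of $[e_1,e_A]$ vanishes, so $[e_1,D]\subset D$ and the local flow $\phi_s$ of $e_1$ carries the leaf $\Sigma$ through $m_1,m_2$ to another leaf $\phi_s(\Sigma)$ of $D$ for every $s$. Since $e_A(H)=0$ by \eqref{BicHyperEq1a}, $H$ is constant on each leaf, hence $H(\gamma_1(s))=H(\gamma_2(s))$ for all $s$; differentiating in $s$ then gives $e_1(H)\big|_{\gamma_1(s)}=e_1(H)\big|_{\gamma_2(s)}$ automatically. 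Feeding this into Lemmas \ref{Intcurves_e1Lem2} and \ref{Intcurves_e1Lem2Hn} shows that $\kappa$ and $\tau$ coincide as functions of arc length, and the fundamental theorem for curves gives congruence. No ODE analysis for $H$ is needed: the equality of $H$ and $e_1(H)$ along the two curves is a direct consequence of the foliated structure, which is precisely what the paper's one-line proof is invoking.
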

\begin{proof}
By considering the Lemma \ref{Intcurves_e1Lem2}, Lemma \ref{Intcurves_e1Lem2Hn} and equation \eqref{BicHyperEq1a}, integral curves  of $e_1$ passing through $m_1$ and $m_2$ have same curvature and torsion. Hence, they are congruent.
\end{proof}

Now, we have the following:
\begin{Lemma}\label{LemmaFltNRMBundle}
An integral submanifold of $D$ has flat normal bundle in $R^{n+1}(c)$ as well as in $E(n+1,c)$.
\end{Lemma}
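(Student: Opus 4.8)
My plan is to fix an integral submanifold $P$ of $D$ and describe its normal bundle explicitly before touching any curvature. Since $P$ has tangent space $\mathrm{span}\{e_2,\dots,e_n\}$ at each point, its normal bundle inside $R^{n+1}(c)$ is spanned by $\{e_1,N\}$, while inside $E(n+1,c)$ it acquires the extra direction of the position vector $x$, which is orthogonal to $R^{n+1}(c)$ (hence orthogonal to $e_1$, $N$ and to $P$); thus the normal bundle of $P$ in $E(n+1,c)$ is spanned by $\{e_1,N,x\}$. Because $R^{n+1}(c)$ has constant curvature and $E(n+1,c)$ is flat, the ambient curvature tensor evaluated on two tangent and two normal arguments vanishes, so the Ricci equation reduces flatness of $R^\perp$ to the pairwise commutativity of the shape operators of $P$ along these normal directions. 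The whole statement will therefore follow once I exhibit a common eigenframe for all of them.

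First I would compute the second fundamental form of $P$ in $R^{n+1}(c)$ by decomposing, for $A,B>1$, the vector $\widetilde\nabla_{e_A}e_B$ into its $P$-tangent, $e_1$ and $N$ components. The $N$-component is $\langle S e_A,e_B\rangle=k_A\delta_{AB}$, so the shape operator $A_N$ is diagonal in $\{e_2,\dots,e_n\}$ with entries $k_A$. The $e_1$-component is $\langle\nabla_{e_A}e_B,e_1\rangle=-\omega_{1B}(e_A)$, and this is exactly where equation \eqref{BicHyperEq3a} enters: $\omega_{1B}(e_A)=0$ whenever $A\neq B$, so $A_{e_1}$ is diagonal as well, with entries $-\omega_{1A}(e_A)$. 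Consequently $A_{e_1}$ and $A_N$ are diagonalized in the common frame $\{e_2,\dots,e_n\}$, hence commute, and the normal bundle of $P$ in $R^{n+1}(c)$ is flat.

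For the statement in $E(n+1,c)$ I would pass from $h$ to the second fundamental form $h^\ast$ of the composition $x=i\circ\psi$ using Remark \ref{RemarkForSecondFundementalForms}, which only adds the term $-c\langle\cdot,\cdot\rangle x$. Since $R^{n+1}(c)$ is totally umbilical in $E(n+1,c)$ with normal $x$, this makes the additional shape operator $A_x$ a multiple of the identity on $P$, while $A_{e_1}$ and $A_N$ are unchanged. Thus $A_{e_1},A_N,A_x$ are all diagonal in $\{e_2,\dots,e_n\}$, commute pairwise, and the Ricci equation (now with vanishing ambient curvature) yields $R^\perp\equiv 0$ in $E(n+1,c)$ as well.

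The only genuinely delicate point is the computation of the $e_1$-component of the second fundamental form and the observation that its off-diagonal entries are governed precisely by the connection forms $\omega_{1B}(e_A)$ that \eqref{BicHyperEq3a} forces to vanish; everything else is bookkeeping with the orthogonality of $\{e_1,N,x\}$ and the Ricci equation in a space of (locally) constant curvature. I would accordingly isolate that computation as the heart of the argument and treat the reduction from commuting shape operators to vanishing normal curvature as a standard consequence.
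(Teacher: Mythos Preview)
Your argument is correct. Both the paper and you hinge on the same key input, equation \eqref{BicHyperEq3a}, but you package the conclusion differently. The paper observes directly that $\widetilde\nabla_{e_A}e_1=\omega_{1A}(e_A)e_A\in D$, so the restriction $f_n=e_1|_{\hat M}$ is a $\nabla^\perp$-parallel section of the normal bundle of $\hat M$ in $R^{n+1}(c)$; since that bundle has rank $2$, one parallel section forces $R^\perp\equiv 0$. You instead invoke the Ricci equation and show that $A_{e_1}$, $A_N$ (and $A_x$ in $E(n+1,c)$) are simultaneously diagonal in $\{e_2,\dots,e_n\}$. The paper's route is slightly more economical in $R^{n+1}(c)$ because it bypasses the Ricci equation altogether, while your approach is the textbook one and has the advantage of treating the $E(n+1,c)$ case on exactly the same footing (the paper's proof is silent on that case and leaves it implicit). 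Either way the substantive computation is the same: the vanishing of the off-diagonal connection forms $\omega_{1B}(e_A)$.
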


\begin{proof}
Let $\hat M$ be an integral submanifold of $D$. Since $e_2,e_3,\hdots,e_n$ are principal directions of $M$, we have
$\tilde\nabla_{e_A}e_1\in D.$ A direct computation yields
$R^\bot(X,Y)f_n=0$, where $X,Y$ are tangent vector fields to $\hat M$, $R^\bot$ is the normal connection of $\hat M$ in $R^{n+1}(c)$ and $f_n=\left. e_1\right|_{\hat M}$. Hence, $\hat M$ has  flat normal bundle.
\end{proof}
Now, we will give the main result of this subsection which provides a local parametrization for the biconservative hypersurfaces in Riemannian space form.
\begin{theorem}\label{TheoremLocalClassForAnyBicHypr}
Let $M$ be a biconservative hypersurface in $R^{n+1}(c), \ c\in\{-1,1\}$ and $H$ as its mean curvature. Further, assume that $H(m)\neq 2/n$,  $m\in M$ if $c=-1$. Let $\Theta(t_1,t_2,\hdots,t_{n-1})$ be a local parametrization of an integral submanifold $\hat M$ of  the distribution $D$ given by \eqref{DefinitionofD}  passing through $m$. Then, there exists a neighbourhood $\mathcal N_m$ of $m$ on which $M$ can be parametrized as
\begin{align}\label{LocalParametrization}
\begin{split}
x(s,t_1,t_2,\hdots,t_{n-1})=&\Theta(t_1,t_2,\hdots,t_{n-1})+\alpha_1(s)\xi_1(t_1,t_2,\hdots,t_{n-1})+\alpha_2(s)\xi_2(t_1,t_2,\hdots,\\&
t_{n-1})+\alpha_3(s)\xi_3(t_1,t_2,\hdots,t_{n-1})
\end{split}
\end{align}
for any parallel, orthonormal base $\{\xi_1,\xi_2,\xi_3\}$ of the normal space of $\hat M$ in $E(n+1,c)$, where $\alpha_1,\alpha_2,\alpha_3$ are some smooth functions. Furthermore, the slices $t_1=\mbox{constant},\hdots,t_{n-1}=\mbox{constant}$ are integral curves of $e_1$. 
\end{theorem}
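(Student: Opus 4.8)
The plan is to build the parametrization by flowing the integral submanifold $\hat M$ of $D$ along $e_1$, exploiting the observation that, restricted to $\hat M$, the three fields $e_1$, $N$ and the position field $x$ form a \emph{parallel} orthonormal frame of the normal bundle $\nu_{\hat M}$ of $\hat M$ in $E(n+1,c)$. First I would verify this parallelism directly. For $A,B>1$ one has $\widetilde\nabla_{e_A}e_1=\omega_{1A}(e_A)e_A$ (the other connection coefficients drop out by \eqref{BicHyperEq3a}, and $h(e_A,e_1)=0$), while $\widetilde\nabla_{e_A}N=-k_Ae_A$; passing from $\widetilde\nabla$ to the flat connection $\hat\nabla$ of $E(n+1,c)$ only adds a multiple of $x$ proportional to $\langle e_A,\cdot\rangle$, which vanishes since $e_A$ is orthogonal to both $e_1$ and $N$, and moreover $\hat\nabla_{e_A}x=e_A$. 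Hence $\hat\nabla_{e_A}e_1$, $\hat\nabla_{e_A}N$ and $\hat\nabla_{e_A}x$ are all tangent to $\hat M$, so $\{e_1,N,x\}|_{\hat M}$ is parallel in $\nu_{\hat M}$ (this re-proves Lemma \ref{LemmaFltNRMBundle}); it is orthonormal because $e_1\perp N\perp x\perp e_1$ with $\langle x,x\rangle=c^{-1}$.

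Next I would set up the flow. Let $\gamma_t$ be the integral curve of $e_1$ issuing from $\Theta(t)$, and write $T=x_\ast e_1$ and $y=x\circ\gamma_t$. By \eqref{BicHyperEq1a} the mean curvature $H$ is constant on $\hat M$, hence a function of $s$ alone, so the structure equations of Lemma \ref{Intcurves_e1Lem2} (for $c=1$), respectively Lemma \ref{Intcurves_e1Lem2Hn} (for $c=-1$), become a linear first-order system $Y'=A(s)Y$ for the triple $Y=(T,N,y)$ whose coefficient matrix $A(s)$ depends only on $H(s)$ and is therefore \emph{the same for every $t$}. Its solution is $Y(s,t)=\Phi(s)Y(0,t)$, where $\Phi$ solves $\Phi'=A\Phi$, $\Phi(0)=\mathrm{Id}$, and $Y(0,t)=(e_1,N,x)|_{\Theta(t)}$. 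Reading off the third component gives $x(s,t)=\sum_i\Phi_{3i}(s)\,\big(e_1,N,x\big)_i(t)$, a combination of the parallel frame with coefficients depending on $s$ only; since $x|_{\hat M}=\Theta$, this is precisely the asserted form \eqref{LocalParametrization} after the harmless relabelling $\alpha_3\mapsto\alpha_3-1$ that isolates the base point $\Theta(t)$. By construction the slices $t_1=\text{const},\dots,t_{n-1}=\text{const}$ are the curves $\gamma_t$, i.e. integral curves of $e_1$.

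To pass from this distinguished frame to an \emph{arbitrary} parallel orthonormal frame $\{\xi_1,\xi_2,\xi_3\}$, I would note that on the connected (locally simply connected) submanifold $\hat M$ any two parallel frames of the flat bundle $\nu_{\hat M}$ differ by a constant element of $O(3)$ when $c=1$, respectively of $O(2,1)$ when $c=-1$. Substituting this constant change of basis converts the profile functions into fixed linear combinations of themselves, which remain functions of $s$ only; Corollary \ref{CorolIntcurves_e1} guarantees in addition that the same profile serves every integral curve, consistent with this picture.

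The main obstacle is the bookkeeping that turns a statement about the abstract position vector into the displayed formula: one must recognise that $x|_{\hat M}$ is \emph{itself} one of the parallel normal fields, so that the ``base point'' $\Theta(t)$ and the profile term $\alpha_3(s)\xi_3(t)$ are not independent, and one must track the causal character of $\nu_{\hat M}$ in the hyperbolic case, invoking the standing hypothesis $H(m)\neq 2/n$ so that the Frenet framing of Lemma \ref{Intcurves_e1Lem2Hn} is valid. Once the $t$-independence of $A(s)$ is extracted from \eqref{BicHyperEq1a}, the actual integration of the linear system is routine.
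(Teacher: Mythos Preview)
Your argument is correct and follows the same overall strategy as the paper: the integral curves of $e_1$ evolve inside the $3$-dimensional normal space of $\hat M$ in $E(n+1,c)$, and the profile depends only on $s$ because $H$ is constant along the leaves of $D$. The execution differs in two respects worth noting. First, the paper passes through the Frenet frame $(T,n,b)$ built in Lemmas \ref{Intcurves_e1Lem2} and \ref{Intcurves_e1Lem2Hn}, writes each integral curve as $\gamma(s)=\gamma(0)+\tilde\alpha_1 t_0+\tilde\alpha_2 n_0+\tilde\alpha_3 b_0$, and then invokes Corollary \ref{CorolIntcurves_e1} (congruence of the integral curves) to conclude that the $\tilde\alpha_a$ are constant on $\hat M$; you bypass the Frenet construction entirely and argue directly that the first-order linear system for $(T,N,y)$ has coefficient matrix $A(s)$ independent of $t$, so a single fundamental solution $\Phi(s)$ serves for all leaves. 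Second, the paper obtains the flatness of $\nu_{\hat M}$ via Lemma \ref{LemmaFltNRMBundle} and then cites a general existence result for a parallel frame, whereas you exhibit the parallel frame $\{e_1,N,x\}|_{\hat M}$ by a direct computation. Your packaging is a bit more self-contained and, incidentally, does not actually need the Frenet apparatus---so the hypothesis $H(m)\neq 2/n$ when $c=-1$, which in the paper's proof is there precisely to keep $n$ and $b$ well defined, is in principle dispensable in your version; the paper's route, on the other hand, has the side benefit of recording the curvature and torsion of the integral curves explicitly.
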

\begin{Remark}
Lemma \ref{LemmaFltNRMBundle} yields that $\hat M$ is a flat normal bundle in $E(n+1,c)$. The existence of a parallel base $\{\xi_1,\xi_2,\xi_3\}$
of the normal space of $\hat M$ in $E(n+1,c)$ follows from \cite[Proposition 1.1, p. 99]{ChenThrofSubmn}.
\end{Remark}

\begin{proof}

Let $D$ be the distribution given by \eqref{DefinitionofD} and $D^\bot=\mathrm{span}\{e_1\}$. Since $TM=D\oplus D^\bot$  and $D,D^\bot$ are two integrable submanifolds, there exists a local coordinate system $\{s,t_1,t_2,\hdots,t_{n-1}\}$ on a neighborhood $\mathcal N_m$ of $M$  such that $D^\bot=\mathrm{span}\{\partial_s\} $ and $D=\mathrm{span}\{\partial_{t_1},\partial_{t_2},\hdots,\partial_{t_{n-1}}\}$ (See\cite[Lemma on p. 182]{KobayashiNomizuBook}) where $x(0,0,\hdots,0)=m$. 

We note that equations \eqref{BicHyperEq2} and \eqref{BicHyperEq3ALL} yield that $d\theta_1=0$ where $\theta_1$ is the 1-form defined by $\theta_1(e_i)=\delta_{1i}$. Thus, $\theta_1$ is closed and because of Poincar\`e lemma (by shrinking $\mathcal N_m$ if necessary), we may assume that $\theta_1$ is exact on $\mathcal N_m$. Thus, we may re-define $s$ so that $e_1=\partial_s$. Furthermore, in the case $c=-1$, if necessary, we may shrink $\mathcal N_m$ so that $H(\tilde m)\neq 2/n$ whenever $\tilde m\in \mathcal N_m$. We will obtain a local parametrization of $\mathcal N_m$.

Let $\hat M$ be an integral submanifold of $D$ passing through $m$ and $\Theta(t_1,t_2,\hdots,t_{n-1})$ be its parametrization.  We will consider the cases $c=1$ and $c=-1$ separately. In each cases, we  define two vector fields $n,b$ which are mutually orthonormal and parallel on the normal bundle of $M$ in $E(n+1,c)$. Further, we  put $t_0=e_1{}_m, n_0=n_m, b_0=b_m$, 
$t(t_1,\hdots,t_{n-1})=\left.e_1\right|_{\hat M}$, $n(t_1,\hdots,t_{n-1})=\left.n\right|_{\hat M}$ and $b(t_1,\hdots,t_{n-1})=\left.b\right|_{\hat M}$. 

\textbf{Case 1.} $c=1$. The two vector fields $n,b$ on $M$ are defined by
$$n=-\frac{n H}{\sqrt{4 +n^2 H^2}} N-\frac{2 }{\sqrt{4 +n^2 H^2}}x\quad\mbox{ and }\quad b=
\frac{-2}{\sqrt{4+n^2H^2}} N +  \frac{ n H}{\sqrt{4+n^2H^2}} x.$$

Because of Lemma \ref{Intcurves_e1Lem2Hn}, the integral curve $\gamma$ of  $e_1$ lies on a 3-plane spanned by $t_0,n_0,b_0$. Thus, we have  
$$\gamma(s)=\gamma(0)+\left.\tilde\alpha_1\right|_\gamma t_0+\left.\tilde\alpha_2\right|_\gamma n_0+\left.\tilde\alpha_3\right|_\gamma b_0$$
for some smooth functions $\tilde\alpha_1,\tilde\alpha_2,\tilde\alpha_3$ defined in $M$. Because of {Corollary} \ref{CorolIntcurves_e1}, we have $e_A(\tilde\alpha_a)=0,\ a=1,2,3,\ A=2,3,\hdots,n$. Therefore, we have 
\begin{align}\label{LocalParametrizationBeforeResault}
\begin{split}
x(s,t_1,t_2,\hdots,t_{n-1})=&\Theta(t_1,t_2,\hdots,t_{n-1})+\tilde\alpha_1(s)t(t_1,t_2,\hdots,t_{n-1})+\tilde\alpha_2(s)n(t_1,t_2,\hdots,\\&
t_{n-1})+\tilde\alpha_3(s)b(t_1,t_2,\hdots,t_{n-1}).
\end{split}
\end{align}
Now, for any given parallel, orthonormal base $\{\xi_1,\xi_2,\xi_3\}$ of the normal space of $\hat M$ in $\mathbb E^{n+2}$, we have
\begin{align}\label{LocalParametrizationBeforeResault2}
\begin{split}
\tilde\alpha_1t+\tilde\alpha_2n+\tilde\alpha_3b=\alpha_1\xi_1+\alpha_2\xi_2+\alpha_3\xi_3
\end{split}
\end{align}
for some functions $\alpha_a=\alpha_a(s)$. By combining equations \eqref{LocalParametrizationBeforeResault} and \eqref{LocalParametrizationBeforeResault2}, we obtain  \eqref{LocalParametrization}.

\textbf{Case 2.} $c=-1$. In this case, we define $n$ and $b$ by
$$n=\left\{
\begin{array}{cc}
-\frac{n H}{\sqrt{n^2 H^2-4}} N+\frac{2 }{\sqrt{n^2 H^2-4}}x&\mbox{if $h_*(e_1,e_1)$ is space-like}\\
-\frac{n H}{\sqrt{4 -n^2 H^2}} N+\frac{2 }{\sqrt{4-n^2 H^2}}x&\mbox{if $h_*(e_1,e_1)$ is time-like}
\end{array}
\right.$$
and
$$b=\left\{
\begin{array}{cc}
 \frac{2\varepsilon}{\sqrt{n^2 H^2-4}} N -  \frac{ n\varepsilon H}{\sqrt{n^2 H^2-4}} x&\mbox{if $h_*\nabla_{e_1}e_1$ is space-like}\\
 \frac{2\varepsilon}{\sqrt{4 -n^2 H^2}} N - \frac{ n\varepsilon H}{\sqrt{4 -n^2 H^2}}x&\mbox{if $h_*\nabla_{e_1}e_1$ is time-like}
\end{array}
\right.$$
where $h_*$ is the second fundamental form of $x:M\rightarrow E(n+1,c)$.
Similarly as in Case 1, we have \eqref{LocalParametrization}.
\end{proof}

\subsection{Biconservative hypersurfaces in $R^{n+1}(c)$ with 3 distinct principle curvatures.}

A direct computation yields that if $M$ is a biconservative hypersurface  in the Riemannian space form $R^{n+1}(c)$ with  2 distinct principal curvatures, then it is an open part of a rotational hypersurface in  $R^{n+1}(c)$ for an appropriately chosen profile curve. This can be proved by using a classical result of M. Do Carmo and M. Dajczer (See \cite[Theorem 4.2]{DocarmoDajczer}). It is the reason that we consider biconservative hypersurfaces  with 3 distinct principal curvatures.  

 We would like to give the following lemma which is proved by the exactly same way as done in \cite[Lemma 3.2]{AbhNCTJMAA}.
\begin{Lemma}\label{LemmaKeyR4c}
Let $M$ be a biconservative hypersurface in $R^{n+1}(c)$ with  principal curvatures 
$$k_1=-\frac{nH}2,\qquad k_2=k_3=\cdots=k_{p+1}\neq k_{p+2}=k_{p+3}=\cdots=k_n.$$ 
Then, we have $e_A(k_i)=0$.
\end{Lemma}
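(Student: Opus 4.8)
The plan is to partition the index set $\{1,\dots,n\}$ according to the value of the principal curvature, writing $I_1=\{1\}$, $I_\lambda=\{2,\dots,p+1\}$ (on which $k_i=\lambda$) and $I_\mu=\{p+2,\dots,n\}$ (on which $k_i=\mu$), and to reduce the statement to the assertion that the two repeated curvatures $\lambda$ and $\mu$ are constant along the distribution $D$ of \eqref{DefinitionofD}. The case $i=1$ is immediate: since $k_1=-nH/2$ and \eqref{BicHyperEq1a} gives $e_A(H)=0$ for $A>1$, we get $e_A(k_1)=0$ at once. Thus only $e_A(\lambda)=0$ and $e_A(\mu)=0$ remain to be established, for $A\in\{2,\dots,n\}$.

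First I would exploit \eqref{CodazziEq1a} inside a single eigenclass. If $A$ and $i$ lie in the same class, then $k_A=k_i$ and \eqref{CodazziEq1a} reads $e_A(k_i)=\omega_{Ai}(e_i)(k_A-k_i)=0$; hence each repeated curvature is constant along the directions of its own class. To transport this to the complementary class I would differentiate the trace identity \eqref{BicEqMod2} along $e_A$: because $e_A(k_1)=0$ and $e_A$ annihilates the curvature of its own class, the relation $3k_1+p\lambda+(n-p-1)\mu=0$ forces $e_A$ to annihilate the other curvature as well. Consequently, as soon as $A$ belongs to a class of multiplicity at least two, one obtains $e_A(k_i)=0$ for every $i$. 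In parallel, \eqref{BicHyperEq2}, \eqref{BicHyperEq3a} and \eqref{BicHyperEq3b}, together with \eqref{CodazziEq1b}, eliminate all mixed connection coefficients $\omega_{AB}(e_C)$ joining three distinct indices from different classes, so that the only connection forms that can survive are the diagonal ones $\omega_{1A}(e_A)$ and the intraclass coefficients.

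The remaining and most delicate point concerns directions $e_A$ lying in a one-dimensional eigenspace, where the intraclass Codazzi argument is vacuous and the surviving coefficient $\omega_{AB}(e_B)$ (with $k_A\ne k_B$) that governs $e_A(k_B)$ through \eqref{CodazziEq1a} is left undetermined by \eqref{CodazziEq1a} and \eqref{CodazziEq1b}. Here I would turn to the Gauss equation \eqref{RnGaussEquation}. Evaluating $R(e_1,e_A)e_B=0$ (valid because the curvature of a space form with diagonal shape operator has no components off the ``diagonal'' pairs) yields a homogeneous first-order linear equation for the coefficient $\omega_{AB}(e_B)$ along the integral curves of $e_1$, while the companion relation \eqref{GaussEq4} and the sectional identity $R(e_A,e_B,e_B,e_A)=c+k_Ak_B$ supply the algebraic information controlling the transverse derivative of this coefficient. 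Combining these, and closing the system with the commutation relation $[e_1,e_A]=-\omega_{1A}(e_A)\,e_A$, should force the surviving coefficient to vanish and hence give $e_A(\lambda)=e_A(\mu)=0$.

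The hard part will be precisely this last step, the treatment of the multiplicity-one directions. The linear ODE produced by $R(e_1,e_A)e_B=0$ is homogeneous, so by itself it cannot yield vanishing; the argument must couple it with the algebraic content of the sectional Gauss equation and with the integrability conditions obtained by cross-differentiating along $e_1$ and along $D$. Executing this bookkeeping carefully---exactly as in \cite[Lemma 3.2]{AbhNCTJMAA}---is where the genuine work lies, the preceding Codazzi-plus-trace reduction being routine.
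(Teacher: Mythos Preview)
Your reduction for directions lying in an eigenclass of multiplicity at least two is correct and matches the paper exactly: intraclass Codazzi \eqref{CodazziEq1a} gives $e_A(k_i)=0$ when $A,i$ belong to the same class, and then the trace relation $3k_1+p\lambda+q\mu=0$ (with $q=n-p-1$) transfers this to the complementary curvature. The paper dispatches the case $p>1$ and $q>1$ in one line for precisely this reason, and then reduces to $p=1$.

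The gap is in your treatment of the multiplicity-one direction, which you yourself flag as ``the hard part''. Your plan---to extract a homogeneous linear ODE from $R(e_1,e_A)e_B$, then ``couple'' it with sectional Gauss identities and cross-differentiation to force $\omega_{AB}(e_B)=0$---is not what the paper (or \cite[Lemma~3.2]{AbhNCTJMAA}) actually does, and as you concede, the homogeneous ODE alone yields nothing. The paper takes a completely different route: it never tries to show the offending connection coefficient vanishes. Instead, it differentiates the trace relation \eqref{BicEqKeyLemmaEq1} along $e_1$ three times, using \eqref{CodazziEq1a} and \eqref{GaussEq4} at each step to express $e_1(k_{p+2})$, $e_1(\omega_1)$, $e_1(\omega_2)$ algebraically. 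This produces four polynomial relations in the variables $k_1,\,k_{p+2},\,\omega_1=\omega_{12}(e_2),\,\omega_2=\omega_{1,p+2}(e_{p+2})$ together with $e_1(k_1),\,e_1e_1(k_1),\,e_1e_1e_1(k_1)$. Eliminating $\omega_1,\omega_2$ from this system yields a single polynomial of degree $16$ in $k_{p+2}$ whose coefficients are functions of $k_1$ and its $e_1$-derivatives only. Since those quantities are all annihilated by $e_A$, the coefficients are constant along any integral curve of $e_A$, so $k_{p+2}$ is a root of a fixed polynomial and therefore locally constant along that curve. This elimination argument is the actual content of the lemma; your ODE/integrability sketch does not supply it.
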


\begin{proof}
Due to assumption, equation \eqref{BicEqMod2} becomes
\begin{equation}\label{BicEqKeyLemmaEq01}
3k_1+pk_2+qk_{p+2}=0.
\end{equation}
The Codazzi equation \eqref{CodazziEq1a} implies $\omega_{1a}(e_a)=\omega_1, a=2,3,\hdots,p+1$ and 
$\omega_{1b}(e_b)=\omega_2, b=p+1,p+2,\hdots,n$, where we put $q=n-p-1$. It is to note that if $p>1$ and $q>1$, then the proof follows from the Codazzi equation \eqref{CodazziEq1a}. Therefore, without loss of generality, we assume $p=1$. In this case, equation \eqref{BicEqKeyLemmaEq01} becomes
\begin{equation}\label{BicEqKeyLemmaEq1}
k_2+qk_{p+2}=-3k_1.
\end{equation}
We will prove the lemma for $c=1$. The other case follows from an analogous computation.

Next, we apply $e_1$ to equation \eqref{BicEqKeyLemmaEq1} and use equations \eqref{CodazziEq1a} and \eqref{BicEqKeyLemmaEq1} to get
\begin{equation}\label{BicEqKeyLemmaEq2}
qk_{p+2}  \left(\omega _1-\omega _2\right)+k_1 \left(q \omega _2+4 \omega _1\right)=-3 e_1(k_1).
\end{equation}
By applying $e_1$ twice to equation \eqref{BicEqKeyLemmaEq2} and using equations \eqref{CodazziEq1a}, \eqref{RnGaussEquation} and \eqref{BicEqKeyLemmaEq1}, we obtain
\begin{align}\nonumber
\begin{split}
-2 c qk_{p+2} -8 c k_1+q e_1(k_1) \omega _2+e_1(k_1) \omega _1+\omega _1^2 \left(-2 q k_{p+2} -8 k_1\right)&\\
+q\omega _2^2 \left(2 k_{p+2} -2 k_1 \right)+6 q k_{p+2} k_1^2 +q (q+1)k_{p+2}^2 k_1  -2 qk_1 +2 q k_{p+2} +12 k_1^3&=-3 e_1e_1(k_1)
\end{split}
\end{align}
and
\begin{align}\nonumber
\begin{split}
6 \omega _1^3 \left(q k_{p+2} +4 k_1\right)+6 q \left(k_1-k_{p+2}\right)  \omega _2^3-3 e_1(k_1) \omega _1^2-3q e_1(k_1)  \omega _2^2&\\
+e_1(k_1) \left(-4 (c+q)+6 qk_{p+2} k_1 +qk_{p+2}^2  (q+1)+18 k_1^2\right)+ \omega_1\left(Q(k_1,k_{p+2})+e_1e_1(k_1)\right)&\\
+q \omega _2 \left(e_1e_1(k_1)-10 k_{p+2}-k_1 \left(k_1^2-7 k_{p+2} k_1+6 k_{p+2}^2-10\right)\right)&=-3 e_1e_1e_1(k_1)
\end{split}
\end{align}
for $Q(k_1,k_{p+2})=\left(qk_{p+2} +4 k_1\right) \left(10 c-6q k_{p+2} k_1 -19 k_1^2\right).$

By a direct computation using these equations, we have obtained a non-trivial polynomial equation 
$$\sum\limits_{t=0}^16P_t\left(k_1,e_1(k_1),e_1e_1(k_1),e_1e_1e_1(k_1)\right)k_{p+2}^t=0$$
for some smooth functions $P_0,P_1,\hdots,P_{16},$ such that 
\begin{align}\nonumber
\begin{split}
P_{15}=&88  q^{12} (q+1)^2 (q+4)^3 (q+13) \left(3 (c-1) q (5 q+71)+k_1^2 (q (q (10 q+91)-1105)-484)\right)\\
&k_1^5\left(e_1\left(k_1\right)\right)^2\\ \qquad\mbox{and}\quad
P_{16}=&242  q^{13} (q+1)^3 (q+4)^3 (q+13)^2k_1^6\left(e_1\left(k_1\right)\right)^2.
\end{split}
\end{align}
Since $e_A(k_1)=e_Ae_1(k_1)=e_Ae_1e_1(k_1)=e_Ae_1e_1e_1(k_1)=0$, we have
$$P_t\left(k_1,e_1(k_1),e_1e_1(k_1),e_1e_1e_1(k_1)\right)=\lambda_t,$$
for some constants $\lambda_0,\lambda_1,\hdots,\lambda_{16}$ along an integral curve $\zeta$ of $e_A$. Hence, we have 
$$\lambda_0+\lambda_1k_{p+2}+\hdots\lambda_{16}k_{p+2}^{16}=0,$$
which yields that $k_{p+2}$ is constant along $\zeta$. Hence, we have $e_A(k_i)=0.$
\end{proof}
The next lemma follows from Lemma \ref{LemmaKeyR4c}.
\begin{Lemma}\label{LemmaRnc}
Let $M$ be a biconservative hypersurface in $R^{n+1}(c)$ with principal curvatures  $k_1,k_2,\hdots,k_n$ such that
$$k_1=-\frac{nH}2\neq k_2=k_3=\cdots=k_{p+1}\neq k_{p+2}=k_{p+3}=\cdots=k_n.$$
Define two distributions $D_1,D_2$ on $M$ where $D_1=\{X|SX=k_2X\}$ and $D_2=\{Y|SY=k_{p+2}Y\}.$
Then the Levi-Civita connection of $M$ satisfies 
\begin{align}\nonumber
\begin{split}
\nabla_{e_1}e_1=0,\quad
\nabla_{Y}X\in D_1,\quad
\nabla_{X}Y\in D_2,\qquad\mbox{$\forall\hspace{.1 cm} X\in D_1$ and $Y\in D_2$.}
\end{split}
\end{align}
\end{Lemma}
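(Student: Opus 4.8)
The plan is to reduce all three assertions to the vanishing of appropriate connection coefficients $\omega_{ij}(e_k)$ in the principal frame $\{e_1,\dots,e_n\}$, where $D_1=\mathrm{span}\{e_a:\,2\le a\le p+1\}$ with $k_a=k_2$ and $D_2=\mathrm{span}\{e_b:\,p+2\le b\le n\}$ with $k_b=k_{p+2}$. The identity $\nabla_{e_1}e_1=0$ is immediate, since $\nabla_{e_1}e_1=\sum_k\omega_{1k}(e_1)e_k$ and each term vanishes: $\omega_{11}=0$ by antisymmetry, and $\omega_{1A}(e_1)=0$ for $A>1$ by \eqref{BicHyperEq2}. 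For the other two assertions it suffices to verify them on the frame vectors: writing $X=\sum_a X^a e_a$ and $Y=\sum_b Y^b e_b$, one has $\nabla_X Y=\sum_{a,b}X^a\big(e_a(Y^b)\,e_b+Y^b\,\nabla_{e_a}e_b\big)$, in which $e_a(Y^b)e_b\in D_2$; hence $\nabla_X Y\in D_2$ will follow once I show $\nabla_{e_a}e_b\in D_2$, and symmetrically $\nabla_Y X\in D_1$ will follow from $\nabla_{e_b}e_a\in D_1$.

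To prove $\nabla_{e_a}e_b\in D_2$ I project onto $e_1$ and onto $D_1$. The $e_1$-component equals $-\omega_{1b}(e_a)$, which vanishes by \eqref{BicHyperEq3a}; the $D_1$-component is $\sum_{a'}\omega_{ba'}(e_a)\,e_{a'}$, so, using $\omega_{ba'}=-\omega_{a'b}$, it suffices to show $\omega_{a'b}(e_a)=0$ for each $a'\in\{2,\dots,p+1\}$, which I do in two cases. For $a'=a$ I use the Codazzi equation \eqref{CodazziEq1a} with $i=b,\ j=a$, namely $e_b(k_a)=\omega_{ba}(e_a)(k_b-k_a)$; since $e_b(k_a)=0$ by Lemma \ref{LemmaKeyR4c} and $k_b\ne k_a$, this forces $\omega_{ba}(e_a)=0$. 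For $a'\ne a$ I use \eqref{CodazziEq1b} with $i=a',\ j=b,\ l=a$, namely $\omega_{a'b}(e_a)(k_{a'}-k_b)=\omega_{a'a}(e_b)(k_{a'}-k_a)$; here the right-hand side vanishes because $k_{a'}=k_a$ (both indices lie in $D_1$), while $k_{a'}-k_b\ne 0$, so $\omega_{a'b}(e_a)=0$. The companion statement $\nabla_{e_b}e_a\in D_1$ follows by the mirror-image computation: the $e_1$-component is $-\omega_{1a}(e_b)=0$ by \eqref{BicHyperEq3a}, the diagonal term $\omega_{ab}(e_b)$ vanishes via \eqref{CodazziEq1a} with $i=a,\ j=b$ using $e_a(k_b)=0$, and the off-diagonal term $\omega_{b'a}(e_b)$ vanishes via \eqref{CodazziEq1b} with $i=b',\ j=a,\ l=b$, where now the right-hand side vanishes because $k_{b'}=k_b$.

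The main obstacle is entirely organizational rather than computational: in each off-diagonal case one must choose the index assignment in \eqref{CodazziEq1b} so that the difference of principal curvatures appearing on the right-hand side is taken between two indices of the same eigendistribution—and is therefore zero—while the difference on the left is taken across distinct distributions and is nonzero. Lemma \ref{LemmaKeyR4c} supplies the crucial input for the diagonal cases, since the vanishing of $\omega_{ba}(e_a)$ and $\omega_{ab}(e_b)$ rests on $e_b(k_a)=e_a(k_b)=0$. Finally, I note that when $p=1$ or $q=n-p-1=1$—in particular in the four-dimensional setting $n=3$, where both $D_1$ and $D_2$ are one-dimensional—the off-diagonal cases are vacuous, and only the diagonal analysis through \eqref{CodazziEq1a} and Lemma \ref{LemmaKeyR4c} is needed.
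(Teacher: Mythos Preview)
Your proof is correct and follows exactly the route the paper intends: the paper simply states that the lemma ``follows from Lemma \ref{LemmaKeyR4c}'' without giving any details, and what you have written is precisely the computation needed to unpack that assertion---reducing the claims to the vanishing of the relevant $\omega_{ij}(e_k)$ via \eqref{BicHyperEq2}, \eqref{BicHyperEq3a}, the Codazzi relations \eqref{CodazziEq1All}, and the key input $e_A(k_i)=0$ from Lemma \ref{LemmaKeyR4c}. Your treatment of the off-diagonal cases through the judicious index assignment in \eqref{CodazziEq1b} is exactly right, and your closing remark about the vacuity of those cases when $p=1$ or $q=1$ is a helpful observation for the four-dimensional application.
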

 
\section{Local classification results in $ R^4(c)$ }
In this section, we give  the complete classification of biconservative hypersurfaces  in $\mathbb S^4$ and $\mathbb H^4$. First, we obtain the following lemma by using Lemma \ref{LemmaRnc}.
\begin{Lemma}\label{BicHyperLeviCvtaR4c}
Let $M$ be a biconservative hypersurface in $R^4(c)$ with three distinct principle curvatures, where $c\in\{-1,+1\}$. Then the Levi-Civita connection of $M$ satisfies
\begin{subequations}\label{BicHyperLeviCvta}
\begin{eqnarray}
\label{BicHyperLeviCvta1}\nabla_{e_1}e_1=0,&\quad \nabla_{e_1}e_2=0,&\quad \nabla_{e_1}e_3=0,\\
\label{BicHyperLeviCvta2}\nabla_{e_2}e_1=\omega_{12}(e_2)e_2,&\quad \nabla_{e_2}e_2=-\omega_{12}(e_2)e_1,&\quad \nabla_{e_2}e_3=0,\\
\label{BicHyperLeviCvta3}\nabla_{e_3}e_1=\omega_{13}(e_3)e_3,&\quad \nabla_{e_3}e_2=0,&\quad \nabla_{e_3}e_3=-\omega_{13}(e_3)e_1.
\end{eqnarray}
\end{subequations}
\end{Lemma}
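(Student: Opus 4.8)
The plan is to expand each covariant derivative $\nabla_{e_i}e_j$ in the principal frame by means of the identity $\nabla_{e_i}e_j=\sum_k\omega_{jk}(e_i)e_k$, and then to annihilate the individual connection coefficients using the skew-symmetry $\omega_{ij}=-\omega_{ji}$ together with the relations already derived for biconservative hypersurfaces. Since $n=3$ and $M$ carries exactly three distinct principal curvatures, Lemma \ref{LemmaRnc} applies with $p=q=1$, so $D_1=\mathrm{span}\{e_2\}$ and $D_2=\mathrm{span}\{e_3\}$. This hands us, for free, both $\nabla_{e_1}e_1=0$ and the two inclusions $\nabla_{e_2}e_3\in\mathrm{span}\{e_3\}$ and $\nabla_{e_3}e_2\in\mathrm{span}\{e_2\}$, which will be the pivot of the argument.

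First I would dispose of the derivatives along $e_1$, giving \eqref{BicHyperLeviCvta1}. Expanding $\nabla_{e_1}e_1$ and invoking \eqref{BicHyperEq2}, which forces $\omega_{12}(e_1)=\omega_{13}(e_1)=0$, yields $\nabla_{e_1}e_1=0$. For $\nabla_{e_1}e_2$ and $\nabla_{e_1}e_3$ the surviving coefficients are $\omega_{21}(e_1),\omega_{23}(e_1)$ and $\omega_{31}(e_1),\omega_{32}(e_1)$; the first-index terms vanish again by \eqref{BicHyperEq2} and skew-symmetry, while $\omega_{23}(e_1)=0$ by \eqref{BicHyperEq3b} since $k_2\neq k_3$. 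Hence $\nabla_{e_1}e_2=\nabla_{e_1}e_3=0$. Similarly, expanding $\nabla_{e_2}e_1$ and $\nabla_{e_3}e_1$ and discarding $\omega_{13}(e_2)$ and $\omega_{12}(e_3)$ by \eqref{BicHyperEq3a} leaves exactly $\omega_{12}(e_2)e_2$ and $\omega_{13}(e_3)e_3$, matching the first columns of \eqref{BicHyperLeviCvta2} and \eqref{BicHyperLeviCvta3}.

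The only genuinely non-formal step is the vanishing of the mixed derivatives $\nabla_{e_2}e_3$ and $\nabla_{e_3}e_2$, and here I would play the two available descriptions of each vector against each other. Expanding $\nabla_{e_2}e_3=\omega_{31}(e_2)e_1+\omega_{32}(e_2)e_2$ and using $\omega_{31}(e_2)=-\omega_{13}(e_2)=0$ (by \eqref{BicHyperEq3a}) shows $\nabla_{e_2}e_3\in\mathrm{span}\{e_2\}$; but Lemma \ref{LemmaRnc} forces $\nabla_{e_2}e_3\in\mathrm{span}\{e_3\}$, so by independence of $e_2,e_3$ we get $\nabla_{e_2}e_3=0$, i.e. $\omega_{23}(e_2)=0$. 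The symmetric computation, using $\omega_{21}(e_3)=-\omega_{12}(e_3)=0$ and the companion inclusion from Lemma \ref{LemmaRnc}, gives $\nabla_{e_3}e_2=0$ and $\omega_{23}(e_3)=0$. This clash-of-two-distributions step is where the content of Lemma \ref{LemmaRnc} is genuinely used, and I expect it to be the main (indeed only) obstacle; the rest is bookkeeping with connection forms.

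Finally I would read off the diagonal derivatives. Expanding $\nabla_{e_2}e_2=-\omega_{12}(e_2)e_1+\omega_{23}(e_2)e_3$ and $\nabla_{e_3}e_3=-\omega_{13}(e_3)e_1-\omega_{23}(e_3)e_2$, and inserting $\omega_{23}(e_2)=\omega_{23}(e_3)=0$ obtained in the previous step, leaves $\nabla_{e_2}e_2=-\omega_{12}(e_2)e_1$ and $\nabla_{e_3}e_3=-\omega_{13}(e_3)e_1$. Collecting all nine derivatives then gives precisely \eqref{BicHyperLeviCvta1}, \eqref{BicHyperLeviCvta2} and \eqref{BicHyperLeviCvta3}, completing the proof.
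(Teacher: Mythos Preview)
Your proof is correct and follows precisely the route the paper intends: the paper gives no detailed argument, stating only that the lemma is obtained ``by using Lemma \ref{LemmaRnc},'' and you have spelled out exactly that computation, combining \eqref{BicHyperEq2}, \eqref{BicHyperEq3a}, \eqref{BicHyperEq3b} with the inclusions $\nabla_{e_2}e_3\in D_2$, $\nabla_{e_3}e_2\in D_1$ from Lemma \ref{LemmaRnc} to kill the remaining coefficients $\omega_{23}(e_2)$ and $\omega_{23}(e_3)$.
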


\subsection{Classification results for $\mathbb S^{4}$}
Let us consider the integrable distribution $D$ given by equation \eqref{DefinitionofD} for $n=3$. Now, we will calculate the integral submanifold of the distribution $D$.
\begin{Prop}\label{Assum1} Any integral submanifold of $D$ is congruent to the flat surface given by
\begin{equation}
\label{BicHyperLeviCvtaS4eq1k}  \Theta(t, u) = \left(c,\frac 1a \cos t, \frac 1a  \sin t,\frac 1b \cos u, \frac 1b \sin u\right)
\end{equation}
for some positive constant $a,b,c$ with $c^2+\frac1{a^2}+\frac1{b^2}=1$.
\end{Prop}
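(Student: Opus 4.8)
The plan is to integrate the structure equations of the integral submanifold $\hat M$ directly in the flat ambient $\mathbb E^{5}$ and to recognise the resulting surface as a sum of two plane circles. Throughout I set $n=3$, $c=1$, write $a_2=\omega_{12}(e_2)$ and $a_3=\omega_{13}(e_3)$ (by Lemma \ref{BicHyperLeviCvtaR4c} these are the only non-vanishing connection coefficients), and denote by $x$ and $N$ the position vector and the unit normal of $M$ in $\mathbb S^{4}\subset\mathbb E^{5}$.

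First I would record the first-order data of $\hat M$ in $\mathbb E^{5}$. Using the Gauss--Weingarten formulas together with Remark \ref{RemarkForSecondFundementalForms}, so that $\hat\nabla_X Y=\nabla_X Y+\langle SX,Y\rangle N-\langle X,Y\rangle x$ for $X,Y$ tangent to $M$, and inserting Lemma \ref{BicHyperLeviCvtaR4c}, a direct computation gives
\[
\hat\nabla_{e_2}e_2=-a_2e_1+k_2N-x,\qquad \hat\nabla_{e_3}e_3=-a_3e_1+k_3N-x,\qquad \hat\nabla_{e_2}e_3=\hat\nabla_{e_3}e_2=0 .
\]
Thus the second fundamental form of $\hat M$ is diagonal and $\{e_2,e_3\}$ is a parallel orthonormal frame on $\hat M$; in particular $\hat M$ is flat and $[e_2,e_3]=0$. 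Passing to coordinates $(t,u)$ with $\partial_t=e_2$, $\partial_u=e_3$, the vanishing of $\hat\nabla_{e_3}e_2$ and $\hat\nabla_{e_2}e_3$ shows that $e_2$ depends only on $t$ and $e_3$ only on $u$, whence $x(t,u)=x_0+c_2(t)+c_3(u)$ with $c_2(t)=\int_0^t e_2$ and $c_3(u)=\int_0^u e_3$. So $\hat M$ is automatically a product of two curves, and it remains to show that $c_2,c_3$ are circles lying in orthogonal planes.

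The key step, and the main obstacle, is to prove $e_2(a_2)=0$ (and symmetrically $e_3(a_3)=0$), which is exactly what makes $c_2$ a curve of constant curvature. I would obtain it from the Codazzi equation \eqref{CodazziEq1a} with $i=1,\,j=2$, giving $a_2=e_1(k_2)/(k_1-k_2)$, combined with Lemma \ref{LemmaKeyR4c} (so $e_2(k_1)=e_2(k_2)=0$) and the bracket relation $[e_2,e_1]=a_2e_2$ read off from Lemma \ref{BicHyperLeviCvtaR4c}: indeed $e_2(e_1(k_2))=e_1(e_2(k_2))+[e_2,e_1](k_2)=a_2e_2(k_2)=0$, so $a_2$ is constant along $c_2$. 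Granting this, differentiating the expression for $\hat\nabla_{e_2}e_2$ once more yields $\hat\nabla_{e_2}\hat\nabla_{e_2}e_2=-(a_2^{2}+k_2^{2}+1)e_2$, that is $c_2'''=-\kappa_2^{2}c_2'$ with constant $\kappa_2=\sqrt{a_2^{2}+k_2^{2}+1}$; hence $c_2$ is a circle of radius $1/\kappa_2$ in the $2$-plane $P_2=\mathrm{span}\{e_2,\hat\nabla_{e_2}e_2\}$, and likewise $c_3$ is a circle of radius $1/\kappa_3$ in a plane $P_3$.

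Finally I would establish $P_2\perp P_3$ and normalise. Since $\hat M$ is flat and $\hat\nabla_{e_2}e_3=0$, the Gauss equation for $\hat M\subset\mathbb E^{5}$ forces $\langle\hat\nabla_{e_2}e_2,\hat\nabla_{e_3}e_3\rangle=a_2a_3+k_2k_3+1=0$; together with $e_2\perp e_3$ and the orthogonality of the tangent vectors to $e_1,N,x$, this gives $P_2\perp P_3$. Choosing an orthonormal basis of $\mathbb E^{5}$ adapted to the splitting $(P_2\oplus P_3)^{\perp}\oplus P_2\oplus P_3$, the first summand being a line because $\dim\mathbb E^{5}=5$, the formula $x(t,u)=x_0+c_2(t)+c_3(u)$ displays a constant first coordinate together with two circular blocks. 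Imposing $\langle x,x\rangle=1$, valid since $\hat M\subset\mathbb S^{4}$, forces the constant vector $x_0+O_2+O_3$ (with $O_2,O_3$ the circle centres) to lie on that line, hence both circles to be centred there, and produces the relation $c^{2}+a^{-2}+b^{-2}=1$ with $a=\kappa_2>0$ and $b=\kappa_3>0$. After a suitable reparametrisation this is precisely \eqref{BicHyperLeviCvtaS4eq1k} up to an isometry of $\mathbb S^{4}$. I expect the identity $e_2(a_2)=0$ to be the only delicate point; the remaining steps are routine applications of the Gauss--Weingarten equations.
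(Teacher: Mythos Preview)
Your proof is correct and follows the same line as the paper's: show $\hat M$ is flat, split the position vector as $\alpha(t)+\beta(u)$, derive and solve the third-order constant-coefficient ODE $\alpha'''+(a_2^{2}+k_2^{2}+1)\alpha'=0$ (and the analogous one for $\beta$), and then normalise using $\langle x,x\rangle=1$. If anything you are more careful: you explicitly prove $e_2(a_2)=0$ via Codazzi and Lemma~\ref{LemmaKeyR4c}, a point the paper silently assumes when treating $c_1,d_1,c_2,d_2$ as constants, and you obtain the orthogonality of the two circle-planes directly from the Gauss identity $a_2a_3+k_2k_3+1=0$ rather than from the algebra of the integration constants.
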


\begin{proof}
Let $\hat M$ be an integral submanifold  of $D$, $j: M\rightarrow\hat M$ the canonical projection. We consider the orthonormal frame field $\{f_1,f_2;f_3,f_4;y\}$ given by
$$f_1=j_*(e_2),\quad\ f_2=j_*(e_3),\qquad f_3=\left.e_1\right|_{\hat M}, \quad f_4=\left.N\right|_{\hat M}, \Theta=x\circ j.$$
Then, equation \eqref{BicHyperLeviCvta} implies
\begin{subequations}\label{BicHyperLeviCvtaS4eq1ALL}
\begin{eqnarray}
\label{BicHyperLeviCvtaS4eq1a}&\quad \widetilde\nabla_{f_1}f_1=-c_1f_3+d_1f_4,&\quad \hat\nabla_{f_1}f_2=0,\\
\label{BicHyperLeviCvtaS4eq1b}&\quad \hat\nabla_{f_2}f_1=0,&\quad \widetilde\nabla_{f_2}f_2=-c_2f_3+d_2f_4.
\end{eqnarray}
Further, we have
\begin{eqnarray}
\label{BicHyperLeviCvtaS4eq1c}\hat\nabla_{f_1}f_3=c_1f_1,&\quad&\hat\nabla_{f_2}f_3=c_2f_2,\\
\label{BicHyperLeviCvtaS4eq1d}\hat\nabla_{f_1}f_4=-d_1f_1,&\quad&\hat\nabla_{f_2}f_3=-d_2f_2,
\end{eqnarray}
where $c_1=\left.\omega_{12}(e_2)\right|_{\hat M}$, $c_2=\left.\omega_{13}(e_3)\right|_{\hat M}$, $d_1=\left.k_2\right|_{\hat M}$, $d_2=\left.k_3\right|_{\hat M}$.
\end{subequations}
It implies that
\begin{eqnarray}
\label{BicHyperLeviCvtaS4eq1e}\hat\nabla_{f_1}f_1=-c_1f_3 + d_1 f_4 -y,\\
\label{BicHyperLeviCvtaS4eq1f}\hat\nabla_{f_2}f_2=-c_2f_3 + d_2 f_4 -y.
\end{eqnarray}
Since $\hat\nabla_{f_1}f_2=0$ and $\hat\nabla_{f_{2}}f_1=0$, we have $\langle R (f_1,f_2)f_1,f_2\rangle =0$, which yields that Gaussian curvature of $\hat M$ is zero. Thus $\hat M$ is flat. Therefore, we have
\begin{equation}
\label{BicHyperLeviCvtaS4eq1g} \Theta(t, u) = \alpha(t) + \beta(u),
\end{equation}
for some smooth vector valued functions $\alpha$ and $\beta$. Now, from equations \eqref{BicHyperLeviCvtaS4eq1e}, \eqref{BicHyperLeviCvtaS4eq1f} and \eqref{BicHyperLeviCvtaS4eq1g}, we have
\begin{eqnarray}
\label{BicHyperLeviCvtaS4eq1h}\alpha^{'''} +a^2\alpha^{'}&=&0,\\
\label{BicHyperLeviCvtaS4eq1i}\beta^{'''}+b^2\beta^{'}&=&0,
\end{eqnarray}
where $a^2=(c_1^2 + d_1^2 + 1)$ and $b^2=(c_2^2 + d_2^2 + 1)$. Further, solving equations \eqref{BicHyperLeviCvtaS4eq1h} and \eqref{BicHyperLeviCvtaS4eq1i} yield that
\begin{equation}
\label{BicHyperLeviCvtaS4eq1j} \Theta(t, u) = C_1 + \cos at C_2+ \sin at C_3 + \cos bu C_4 +  \sin bu C_5
\end{equation}
for some constant vectors $C_1, C_2, C_3, C_4$ and $C_5$, respectively. Therefore, by taking into account that $\{\partial_t,\partial_u\}$ is an orthonormal base and considering $\langle y,y\rangle=1$, we see that, up to rotations, we can assume 
$C_1 = (c, 0, 0, 0, 0)$, $C_2 = (0, \frac 1a , 0, 0, 0)$, $C_3 = (0, 0, \frac 1a , 0, 0)$, $C_4 = (0, 0, 0, \frac 1b, 0)$ and $C_5 = (0, 0, 0, 0, \frac 1b)$ for the constant $c=(1-\frac1{a^2}-\frac1{b^2})^{1/2}$. By re-defining $t,u$ properly, we obtain that $\hat M$ is congruent to the flat surface given by \eqref{BicHyperLeviCvtaS4eq1k}.
\end{proof}
Next, we obtain the following local classifications of biconservative hypersurfaces in $\mathbb S^{4}$.
 
\begin{theorem}\label{Classificationtheoremsphere}
Let $M$ be a hypersurface in $S^{4}(1)$ with diagonalizable shape operator and three distinct principal curvatures. Then, $M$ is biconservative if and only if it is congruent to the submanifolds in $\mathbb E^5$ given by
\begin{equation}\label{BicHyprS4Example}
x(s,t,u)=\left(\alpha_1(s),\alpha_2(s)\cos t,\alpha_2(s)\sin t,\alpha_3(s)\cos t,\alpha_3(s)\sin t\right),
\end{equation}
for a smooth, arc-length parametrized curve $\alpha=(\alpha_1,\alpha_2,\alpha_3):(a,b)\rightarrow \mathbb S^2(1)$  with spherical curvature satisfying
\begin{eqnarray}
\label{ProfileCurvHyprS4ALL}\kappa_S&=&\frac{-3H}2,
\end{eqnarray}
where $H=H(s)$ is the mean curvature of $M$.
\end{theorem}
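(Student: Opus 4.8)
The statement is an equivalence, so the plan is to prove the two implications separately: the forward direction by assembling the structural results of Section 3, and the converse by a direct shape-operator computation.

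For the forward direction, assume $M\subset\mathbb S^4(1)$ is biconservative with three distinct principal curvatures and set $n=3$. Lemma \ref{BicHyperLeviCvtaR4c} gives the block form of the Levi-Civita connection, so the distribution $D$ of \eqref{DefinitionofD} is integrable and, by Proposition \ref{Assum1}, any integral submanifold $\hat M$ is congruent to the flat surface $\Theta(t,u)=\left(c,\tfrac1a\cos t,\tfrac1a\sin t,\tfrac1b\cos u,\tfrac1b\sin u\right)$. Theorem \ref{TheoremLocalClassForAnyBicHypr} then parametrizes $M$ near $m$ as $x=\Theta+\alpha_1\xi_1+\alpha_2\xi_2+\alpha_3\xi_3$ for any parallel orthonormal normal frame $\{\xi_1,\xi_2,\xi_3\}$ of $\hat M$ in $\mathbb E^5$. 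The key step is to make this frame explicit: writing $E_0=(1,0,0,0,0)$, $P_1=(0,\cos t,\sin t,0,0)$ and $P_2=(0,0,0,\cos u,\sin u)$, I verify that $\{E_0,P_1,P_2\}$ is orthonormal, spans the normal space of $\hat M$, and is parallel in the normal bundle, because $\partial_t P_1=a\,\Theta_t$ and $\partial_u P_2=b\,\Theta_u$ are tangent while $\partial_u P_1=\partial_t P_2=0$ and $E_0$ is constant. Substituting $\xi_1=E_0$, $\xi_2=P_1$, $\xi_3=P_2$ and absorbing the constants $c,\tfrac1a,\tfrac1b$ into the functions $\alpha_i(s)$ collapses the parametrization to the form \eqref{BicHyprS4Example} (with $u$ as the second circle parameter). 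It then remains to record the profile curve: since $\langle x,x\rangle=\alpha_1^2+\alpha_2^2+\alpha_3^2=1$, the curve $\alpha=(\alpha_1,\alpha_2,\alpha_3)$ lies on $\mathbb S^2(1)$, and since $s$ was chosen so that $e_1=\partial_s$ is unit, $|\alpha'|=1$; as the slices $t,u=\mathrm{const}$ are integral curves of $e_1$, Lemma \ref{Intcurves_e1Lem1} with $n=3$ gives their spherical curvature $\kappa_S=-3H/2$, which is exactly \eqref{ProfileCurvHyprS4ALL}.

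For the converse I start from \eqref{BicHyprS4Example} with $\alpha$ unit-speed on $\mathbb S^2(1)$ and compute directly. The frame $\{x_s,x_t,x_u\}$ is orthogonal with $g=ds^2+\alpha_2^2\,dt^2+\alpha_3^2\,du^2$, and the unit normal of $M$ in $\mathbb S^4$ is $N=\nu_0 E_0+\nu_1 P_1+\nu_2 P_2$, where $\nu=(\nu_0,\nu_1,\nu_2)=\alpha\times\alpha'$ is the spherical normal of $\alpha$ (a one-line check confirms $N\perp x_s,x_t,x_u,x$ and $|N|=1$). Using the spherical Frenet relations $\alpha''=-\alpha+\kappa_S\nu$ and $\nu'=-\kappa_S\alpha'$ together with the Weingarten formula \eqref{MEtomWeingarten}, I obtain $S(x_s)=\kappa_S x_s$, $S(x_t)=-(\nu_1/\alpha_2)x_t$ and $S(x_u)=-(\nu_2/\alpha_3)x_u$, so the principal curvatures $k_1=\kappa_S$, $k_2=-\nu_1/\alpha_2$, $k_3=-\nu_2/\alpha_3$ depend only on $s$. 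Hence $H=H(s)$ and $\nabla H=H'(s)\,x_s$ is proportional to $e_1=x_s$, which makes the biconservative equation \eqref{BiconservativeEquationinRn1} equivalent to $k_1=-3H/2$; but the hypothesis $\kappa_S=-3H/2$ says precisely this, so $M$ is biconservative.

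The genuinely routine parts (flatness of $\hat M$, the explicit $\Theta$, and the closed-form shape operator) are handed to us by Proposition \ref{Assum1} and the spherical Frenet formalism. The main obstacle I anticipate is the bookkeeping in the forward direction: one must confirm that the natural radial fields $E_0,P_1,P_2$ are normal-parallel rather than merely ambient-constant, and that re-absorbing the offsets $c,\tfrac1a,\tfrac1b$ into $\alpha_i$ preserves both the $\mathbb S^2(1)$ constraint and the unit-speed normalization of $\alpha$ — points that rest on the block structure of the connection in Lemma \ref{BicHyperLeviCvtaR4c}.
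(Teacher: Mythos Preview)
Your forward implication is essentially the paper's own argument: you invoke Proposition~\ref{Assum1} and Theorem~\ref{TheoremLocalClassForAnyBicHypr}, pick the same parallel normal frame $\{E_0,P_1,P_2\}$ (the paper's $\xi_1,\xi_2,\xi_3$), and absorb the constants $c,\tfrac1a,\tfrac1b$ into the $\alpha_i$ exactly as the paper does; your extra lines verifying that $P_1,P_2$ are normal-parallel and that $\alpha$ lands on $\mathbb S^2(1)$ with unit speed simply make explicit what the paper leaves to the reader, and citing Lemma~\ref{Intcurves_e1Lem1} for $\kappa_S=-3H/2$ is in fact more direct than the paper's reference to Lemma~\ref{Intcurves_e1Lem2}.

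Where you genuinely add to the paper is the converse: the paper's proof treats only the direction ``biconservative $\Rightarrow$ parametrized form'' and is silent on the return trip, whereas you supply a clean self-contained computation via the spherical Frenet frame $\nu=\alpha\times\alpha'$, obtaining $k_1=\kappa_S$, $k_2=-\nu_1/\alpha_2$, $k_3=-\nu_2/\alpha_3$ and hence $\nabla H\parallel\partial_s$, so that \eqref{BiconservativeEquationinRn1} reduces to $k_1=-3H/2$. This closes the ``if and only if'' cleanly and is a worthwhile addition.
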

\begin{proof}
Let $M$ be a biconservative hypersurface in $\mathbb S^4$, $m\in M$ and $\hat M$ be an integral submanifold of the distribution $D$. Then $M$ has a local parametrization given in  Theorem \ref{TheoremLocalClassForAnyBicHypr} and Proposition \ref{Assum1}. It can be assumed that $\hat M$ can have the form given in \eqref{BicHyperLeviCvtaS4eq1k}. We will put $t_1=t$ and $t_2=u$ in this case.

Note that the vector fields 
$\xi_1(t,u)=(1,0,0,0,0)$, $\xi_2(t,u)=(0,\cos t,\sin t,0,0)$ and $\xi_3(t,u)=(0,0,0,\cos u,\sin u)$
form a parallel, orthonormal base for the normal space of $\hat M$ in $\mathbb E^5$. Putting $\xi_1, \xi_2, \xi_3$ in the equation \eqref{LocalParametrization}, we obtain 
\begin{align}\nonumber
\begin{split}
x(s,t,u)=&\left(c+\beta _1(s),\left(\frac{1}{a}+\beta _2(s)\right)\cos  t ,\left(\frac{1}{a}+\beta _2(s)\right)\sin  t ,\left(\frac{1}{b}+\beta _3(s)\right)\cos u ,\right.\\&\left.\left(\frac{1}{b}+\beta _3(s)\right)\sin u \right).
\end{split}
\end{align}
By defining $\alpha_1=c+\beta _1$, $\alpha_2=\frac{1}{a}+\beta _2$ and $\alpha_3=\frac{1}{b}+\beta _3$, we obtain equation \eqref{BicHyprS4Example}. Now, we point out that the integral curve of $e_1$ is congruent to the smooth, arc-length parametrized curve $\alpha=(\alpha_1,\alpha_2,\alpha_3):(a,b)\rightarrow \mathbb S^2(1)$ because of Theorem \ref{TheoremLocalClassForAnyBicHypr}. Thus, Lemma \ref{Intcurves_e1Lem2} yields  that the spherical curvature $\kappa_S$ satisfies equation \eqref{ProfileCurvHyprS4ALL}.
\end{proof}

\subsection{Classification results for $\mathbb H^{4}$}

Similar to previous subsection, first we will obtain integral submanifolds of the distribution $D$ given by equation \eqref{DefinitionofD} for $n=3$.

\begin{Prop}\label{AssumpH41} 
Any integral submanifold of $D$ is congruent to one of the four flat surfaces given below.
\begin{enumerate}
\item A surface given by equation \eqref{BicHyperLeviCvtaS4eq1k}
for some constants $a,b,c$ such that $c^2-\frac1{a^2}-\frac1{b^2}=1$;

\item A surface given by
\begin{equation}
\label{IntSubmnDH42} \Theta(t,u)=\left(\frac{1}{a}\cosh  u,\frac{1}{a}\sinh u,\frac{1}{b} \cos t,\frac{1}{b}\sin t,c\right)
\end{equation}
for some constants $a,b,c$ such that $\frac1{a^2}-c^2-\frac1{b^2}=1$;

\item A surface given by
\begin{equation}
\label{IntSubmnDH43} \Theta(t,u)=\left(a u^2+\frac{a}{b^2}+a+\frac{1}{4 a},u,\frac{1}{b}\cos t,\frac{1}{b}\sin t,a u^2+\frac{a}{b^2}+a-\frac{1}{4 a}\right)
\end{equation}
for some non-zero constants $a,b$;

\item A surface given by
\begin{equation}
\label{IntSubmnDH44} \Theta(t,u)=\left(a \left(t^2+u^2\right)+b,t,u,\sqrt{-\frac{1}{4 a^2}+\frac{b}{a}-1},a \left(t^2+u^2\right)-\frac{1}{2 a}+b\right)
\end{equation}
for some non-zero constants $a,b$.
\end{enumerate}
\end{Prop}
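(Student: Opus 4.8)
The plan is to mirror the argument of Proposition \ref{Assum1}, adapting every step to the Lorentzian ambient $\mathbb E^5_1$ and tracking causal characters carefully; the extra sign freedom in $\mathbb E^5_1$ is precisely what splits the single spherical conclusion into four cases. First I would fix an integral submanifold $\hat M$ of $D$ with canonical projection $j$ and introduce the frame $\{f_1,f_2;f_3,f_4;y\}$ given by $f_1=j_*(e_2)$, $f_2=j_*(e_3)$, $f_3=\left.e_1\right|_{\hat M}$, $f_4=\left.N\right|_{\hat M}$ and $y=x\circ j$, where now the position vector satisfies $\langle y,y\rangle=-1$. Using Lemma \ref{BicHyperLeviCvtaR4c} together with the Gauss--Weingarten formulae \eqref{MEtomGauss}, \eqref{MEtomWeingarten} and Remark \ref{RemarkForSecondFundementalForms} with $c=-1$, I would write down the analogues of \eqref{BicHyperLeviCvtaS4eq1a}--\eqref{BicHyperLeviCvtaS4eq1f}. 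The only change from the spherical computation is that the normal correction $-c\langle f_i,f_i\rangle x$ now contributes $+y$ instead of $-y$, so that $\hat\nabla_{f_1}f_1=-c_1f_3+d_1f_4+y$ and similarly for $f_2$, with $c_i,d_i$ constant on $\hat M$ by Lemma \ref{LemmaKeyR4c} and the Codazzi relations.

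Since $\hat\nabla_{f_1}f_2=\hat\nabla_{f_2}f_1=0$, the surface $\hat M$ is flat, and as in the spherical case this gives the additive splitting $\Theta(t,u)=\alpha(t)+\beta(u)$. Differentiating $\alpha''=\hat\nabla_{f_1}f_1$ once more and using $\hat\nabla_{f_1}f_3=c_1f_1$, $\hat\nabla_{f_1}f_4=-d_1f_1$ and $\hat\nabla_{f_1}y=f_1$, I obtain the constant-coefficient equation
\[
\alpha'''+(c_1^2+d_1^2-1)\alpha'=0,
\]
and the analogous equation
\[
\beta'''+(c_2^2+d_2^2-1)\beta'=0.
\]
In contrast to the spherical setting, where the coefficient $c_i^2+d_i^2+1$ is always positive, here the sign of $c_i^2+d_i^2-1$ is unconstrained, so each of $\alpha,\beta$ is trigonometric (sign $+$), exponential/hyperbolic (sign $-$), or, in the borderline case $c_i^2+d_i^2=1$, a quadratic polynomial.

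The heart of the argument is to decide which of the nine resulting sign combinations actually occur. Here I would use that $\partial_t=\alpha'$ and $\partial_u=\beta'$ form an orthonormal, spacelike base, which forces $\langle\alpha',\beta'\rangle\equiv0$ and hence that the planes spanned by the non-constant coefficient vectors of $\alpha$ and of $\beta$ are mutually orthogonal. A short computation shows that a hyperbolic factor spans a Lorentzian $(-,+)$ plane, thus consuming one timelike direction, while a polynomial factor forces its leading coefficient to be null, thus consuming one null direction. Because $\mathbb E^5_1$ has index one, two hyperbolic factors are impossible (they would give two orthogonal timelike directions), and a hyperbolic factor cannot coexist with a polynomial factor, since the orthogonal complement of a timelike vector in $\mathbb E^5_1$ is positive definite and contains no null direction. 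Up to interchanging $t$ and $u$, this eliminates all but the four combinations (trig, trig), (trig, hyperbolic), (trig, polynomial) and (polynomial, polynomial). I expect this signature bookkeeping, and in particular the null/parabolic cases which have no Riemannian analogue, to be the main obstacle.

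Finally, for each surviving combination I would solve the two ODEs explicitly, apply a Lorentzian isometry of $\mathbb E^5_1$ to place the constant coefficient vectors into standard positions (exactly as the vectors $C_1,\dots,C_5$ were normalized in Proposition \ref{Assum1}), and then impose $\langle\Theta,\Theta\rangle=-1$ together with $|\partial_t|=|\partial_u|=1$ to pin down the remaining constants. This produces the constraints $c^2-\tfrac1{a^2}-\tfrac1{b^2}=1$, $\tfrac1{a^2}-c^2-\tfrac1{b^2}=1$, and their degenerate analogues, and recovers precisely the four normal forms \eqref{BicHyperLeviCvtaS4eq1k}, \eqref{IntSubmnDH42}, \eqref{IntSubmnDH43} and \eqref{IntSubmnDH44}, each of which one checks satisfies $\langle\Theta,\Theta\rangle=-1$ identically.
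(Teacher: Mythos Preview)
Your proposal is correct and reaches the same four normal forms, but the way you cut the nine sign combinations down to four is genuinely different from the paper's argument. You argue \emph{a posteriori}, by analysing the causal type of the coefficient vectors in the explicit solutions of the ODEs: a hyperbolic factor forces a Lorentzian $2$-plane, a polynomial factor forces a null leading coefficient, and the index-one signature of $\mathbb E^5_1$ then rules out (hyperbolic, hyperbolic) and (hyperbolic, polynomial). The paper instead argues \emph{a priori}: since $\hat M$ is intrinsically flat, the Gauss equation \eqref{GaussEq} applied to $(f_1,f_2,f_1,f_2)$ yields the algebraic identity $c_1c_2+d_1d_2=1$, and the Cauchy--Schwarz inequality for the vectors $(c_1,d_1)$ and $(c_2,d_2)$ then gives $(c_1^2+d_1^2)(c_2^2+d_2^2)\geq 1$, which immediately forbids both coefficients $c_i^2+d_i^2-1$ from being negative, or one negative and one zero. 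The paper's route is shorter and purely algebraic, never touching the causal structure of $\mathbb E^5_1$; your route is more geometric and explains \emph{why} the excluded combinations fail in terms of the ambient signature, which is arguably more conceptual. Both are valid, and the downstream normalization of the constant vectors and the imposition of $\langle\Theta,\Theta\rangle=-1$ proceed identically.
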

\begin{proof}
Let $\hat M$ be an integral submanifold  of $D$ passing through $m\in M$.
By a similar way in the proof of Proposition \ref{Assum1}, we see that  $\hat M$ is flat and it can be parametrized as $\Theta(t,u)$ given in
equation \eqref{BicHyperLeviCvtaS4eq1g} for some $\mathbb E^5_1$-valued functions $\alpha$ and $\beta$ satisfying
\begin{eqnarray}
\label{BicHyperLeviCvtaH4eq1h}\alpha^{'''} +(c_1^2 + d_1^2 - 1)\alpha^{'}&=&0,\\
\label{BicHyperLeviCvtaH4eq1i}\beta^{'''}+(c_2^2 + d_2^2 - 1)\beta^{'}&=&0,
\end{eqnarray}
where $c_1,c_2,d_1,d_2$ are constants defined in Proposition \ref{Assum1}. Moreover, $\Theta(t,u)$ satisfies
\begin{subequations}\label{BicHyperLeviCvtaH4ThetaEquations}
\begin{eqnarray}
\label{BicHyperLeviCvtaH4ThetaEquations1}\langle\Theta_t,\Theta_t\rangle=\langle\Theta_u,\Theta_u\rangle=1,\quad & 
\langle\Theta_t,\Theta_u\rangle=0,\\
\label{BicHyperLeviCvtaH4ThetaEquations2}\langle\Theta,\Theta\rangle=-1.
\end{eqnarray}
\end{subequations}
Since $\hat M$ is flat, we have $\hat R(\partial_t,\partial_u,\partial_t,\partial_u)=0$, where $\hat R$ is the curvature tensor of $\hat M$. The Gauss equation yields 
$$c_1c_2+d_1d_2=1.$$
Further, an application of well-known Cauchy-Schwarz inequality for the vectors $v=(c_1,d_1)$ and $w=(c_2,d_2)$ yields
$$(c_1^2+d_1^2)(c_2^2+d_2^2)\geq1.$$
Therefore, we have four possible cases:
\begin{itemize}
\item $c_1^2+d_1^2>1,\quad c_2^2+d_2^2>1$,
\item $c_1^2+d_1^2>1,\quad c_2^2+d_2^2<1$,
\item $c_1^2+d_1^2>1,\quad c_2^2+d_2^2=1$,
\item $c_1^2+d_1^2=1,\quad c_2^2+d_2^2=1$.
\end{itemize}

\textbf{Case I.} $c_1^2+d_1^2>1, c_2^2+d_2^2>1$. In this case, by solving equations \eqref{BicHyperLeviCvtaH4eq1h}, \eqref{BicHyperLeviCvtaH4eq1i} and using equation \eqref{BicHyperLeviCvtaS4eq1g}, we obtain $\Theta(t,u)$ as given in equation \eqref{BicHyperLeviCvtaS4eq1j} for some constant vectors $C_1, C_2, C_3, C_4$ and $C_5$. Further, considering equation \eqref{BicHyperLeviCvtaH4ThetaEquations}, we see that $\hat M$ is congruent to the surface given by equation \eqref{BicHyperLeviCvtaS4eq1k}.

\textbf{Case II.} $c_1^2+d_1^2>1, c_2^2+d_2^2<1$. In this case, by solving equations \eqref{BicHyperLeviCvtaH4eq1h}, \eqref{BicHyperLeviCvtaH4eq1i} and using equation \eqref{BicHyperLeviCvtaS4eq1g}, we obtain $\Theta(t,u)$ as given in 
\begin{equation}
\nonumber \Theta(t, u) = C_1 + \cos at C_2+ \sin at C_3 + \cosh bu C_4 +  \sinh bu C_5
\end{equation}
for some constant vectors $C_1, C_2, C_3, C_4$ and $C_5$. Again considering equation \eqref{BicHyperLeviCvtaH4ThetaEquations}, we see that $\hat M$ is congruent to the surface given by equation \eqref{IntSubmnDH42}.

\textbf{Case III.} $c_1^2+d_1^2>1, c_2^2+d_2^2=1$. In this case, by solving equations \eqref{BicHyperLeviCvtaH4eq1h}, \eqref{BicHyperLeviCvtaH4eq1i} and using equation \eqref{BicHyperLeviCvtaS4eq1g}, we obtain $\Theta(t,u)$ as given in 
\begin{equation}
\nonumber \Theta(t, u) = C_1 + \cos bt C_2+ \sin bt C_3 + u^2 C_4 +  u C_5
\end{equation}
for a non-zero constant $b$ and some constant vectors $C_1, C_2, C_3, C_4$ and $C_5$. Considering equation \eqref{BicHyperLeviCvtaH4ThetaEquations1}, we obtain 
$\langle C_2,C_2\rangle=\langle C_3,C_3\rangle=1/b^2$, $\langle C_4,C_4\rangle=0$, $\langle C_5,C_5\rangle=1$
and
$\langle C_a,C_b\rangle=0$ if $a\neq b$,  $a,b>1$. Thus, up to congruency we may assume
$C_2=(0,0,1/b,0,0), $ $C_3=(0,0,1/b,0,0), $, $C_4=(a,0,0,0,a), $ and $C_2=(0,1,0,0,0) $ for a constant $a\neq0$. Finally, by considering equation \eqref{BicHyperLeviCvtaH4ThetaEquations2}, we conclude that 
$\hat M$ is congruent to the surface given by equation \eqref{IntSubmnDH43}.

\textbf{Case IV.} $c_1^2+d_1^2=1, c_2^2+d_2^2=1$. In this case, by solving equations \eqref{BicHyperLeviCvtaH4eq1h}, \eqref{BicHyperLeviCvtaH4eq1i} and using equation \eqref{BicHyperLeviCvtaS4eq1g}, we obtain $\Theta(t,u)$ as given in 
\begin{equation}
\nonumber \Theta(t, u) = C_1 + t^2 C_2+ t C_3 + u^2 C_4 +  u C_5
\end{equation}
for a non-zero constant $b$ and some constant vectors $C_1, C_2, C_3, C_4$ and $C_5$. By the same way in the Case III, we obtain that $\hat M$ is congruent to the surface  given by equation \eqref{IntSubmnDH44}.
\end{proof}

\begin{theorem}\label{Classificationtheoremhypr}
A biconservative hypersurface $M$ in $\mathbb H^{4}$ with three distinct principal curvatures is congruent to one of the four hypersurfaces given below.
\begin{enumerate}
\item A hypersurface in $\mathbb H^4$ given by
equation \eqref{BicHyprS4Example}
for a smooth, arc-length parametrized curve $\alpha=(\alpha_1,\alpha_2,\alpha_3):(a,b)\rightarrow \mathbb H^2(-1)$;
\item
A hypersurface in $\mathbb H^4$ given by
\begin{equation}\label{BicHyprH4Example2}
x(s,t,u)=\left(\alpha_1(s)\cosh t,\alpha_1(s)\sinh t,\alpha_2(s)\cos t,\alpha_2(s)\sin t,\alpha_3(s)\right)
\end{equation} 
for a smooth, arc-length parametrized curve $\alpha=(\alpha_1,\alpha_2,\alpha_3):(a,b)\rightarrow \mathbb H^2(-1)$;
\item
A hypersurface in $\mathbb H^4$ given by
\begin{align}\label{BicHyprH4Example3}
\begin{split}
x(s,t,u)=&\left(\frac{a A(s)^2+a}{s}+a s u^2+\frac{s}{4 a},s u,A(s) \cos t,A(s) \sin t,\right.\\&\left.
\frac{a A(s)^2+a}{s}+a s u^2-\frac{s}{4 a}\right)
\end{split} 
\end{align} 
for smooth functions $\alpha_2,\alpha_3$ and some non-zero constants $a,\alpha_1$;

\item
A hypersurface in $\mathbb H^4$ given by
\begin{align}\label{BicHyprH4Example4}
\begin{split}
x(s,t,u)=&\left(\frac{a A(s)^2}{s}+a s \left(t^2+u^2\right)+\frac{s}{4 a}+\frac{a}{s},s t,s u,A(s),\right.\\&\left.
\frac{a A(s)^2}{s}+a s \left(t^2+u^2\right)-\frac{s}{4 a}+\frac{a}{s}\right)
\end{split} 
\end{align} 
for a smooth function $A$ and a non-zero constant $a$;
\end{enumerate}
\end{theorem}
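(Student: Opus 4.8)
The strategy is to combine the universal local parametrization of Theorem \ref{TheoremLocalClassForAnyBicHypr} with the list of integral submanifolds produced in Proposition \ref{AssumpH41}, so that the four cases of that proposition become the four families of the theorem. First I would fix $m\in M$; since $M$ is biconservative with non-constant mean curvature, $\nabla H$ does not vanish (Remark \ref{FirstRemark}), hence the set $\{H=2/3\}$ has empty interior and I may shrink a neighbourhood so that $H\neq 2/3$ on it. This makes the hypothesis of Theorem \ref{TheoremLocalClassForAnyBicHypr} available with $n=3$, $c=-1$, giving
$$x=\Theta+\alpha_1\xi_1+\alpha_2\xi_2+\alpha_3\xi_3,$$
where $\hat M=\Theta(t,u)$ is an integral submanifold of $D$, the functions $\alpha_a=\alpha_a(s)$ depend only on $s$ by \eqref{BicHyperEq1a}, and $\{\xi_1,\xi_2,\xi_3\}$ is a parallel orthonormal base of the normal space of $\hat M$ in $\mathbb E^5_1$. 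By Proposition \ref{AssumpH41}, $\hat M$ is congruent to exactly one of \eqref{BicHyperLeviCvtaS4eq1k}, \eqref{IntSubmnDH42}, \eqref{IntSubmnDH43} or \eqref{IntSubmnDH44}, and I would treat these four cases in turn.

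For the two non-parabolic models I expect the computation to mirror the proof of Theorem \ref{Classificationtheoremsphere}. For \eqref{BicHyperLeviCvtaS4eq1k} one verifies that the constant time-like field $(1,0,0,0,0)$ together with $(0,\cos t,\sin t,0,0)$ and $(0,0,0,\cos u,\sin u)$ is parallel and orthonormal in the normal bundle (their $t$- and $u$-derivatives are tangential); substituting into \eqref{LocalParametrization} and renaming $\alpha_1=c+\beta_1$, $\alpha_2=\frac1a+\beta_2$, $\alpha_3=\frac1b+\beta_3$ yields \eqref{BicHyprS4Example}. For \eqref{IntSubmnDH42} the analogous frame uses the time-like parallel field $(\cosh u,\sinh u,0,0,0)$ and produces \eqref{BicHyprH4Example2}. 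In both cases the requirement $\langle x,x\rangle=-1$ forces $\alpha=(\alpha_1,\alpha_2,\alpha_3)$ to satisfy $\alpha_1^2-\alpha_2^2-\alpha_3^2=1$, i.e.\ to lie on $\mathbb H^2(-1)$, while Lemma \ref{Intcurves_e1Lem2Hn} records its curvature and torsion in terms of $H$; this gives the stated profile-curve descriptions.

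The parabolic models \eqref{IntSubmnDH43} and \eqref{IntSubmnDH44} are the main obstacle. Here the tangent plane of $\hat M$ is still space-like, so the normal bundle is Lorentzian of signature $(-,+,+)$, but the degenerate equations $\alpha'''=0$ underlying these surfaces entangle a time-like and a space-like coordinate along a null direction, so no naive constant frame is orthonormal. I would instead build the parallel normal fields directly from the characterisation ``$V$ is parallel iff $V'$ is tangential to $\hat M$'': for \eqref{IntSubmnDH43} this forces $V'$ to be a multiple of $\Theta_u$ and, together with the null direction $(1,0,0,0,1)$, yields a $u$-dependent orthonormal triple, one of whose members is the time-like combination $(2a^2u^2+1,2au,0,0,2a^2u^2)$, with the analogous $(t,u)$-dependent triple for \eqref{IntSubmnDH44}. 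Substituting these into \eqref{LocalParametrization} and then choosing the renaming of the profile functions that absorbs the quadratic terms is the delicate part: it is exactly this bookkeeping that produces the specific combinations $\frac{aA^2+a}{s}+asu^2\pm\frac{s}{4a}$ in \eqref{BicHyprH4Example3} and $\frac{aA^2}{s}+as(t^2+u^2)\pm\frac{s}{4a}+\frac as$ in \eqref{BicHyprH4Example4}. Once the four substitutions are carried out, the resulting parametrizations are precisely the four families, completing the classification.
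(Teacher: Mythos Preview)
Your overall architecture matches the paper's exactly: apply Theorem \ref{TheoremLocalClassForAnyBicHypr} and Proposition \ref{AssumpH41}, and handle the four integral-submanifold types separately, with Cases 1 and 2 following the sphere argument verbatim. For Cases 3 and 4 you have also correctly located the issue of constructing a parallel orthonormal normal frame adapted to the null direction, and your sample field $(2a^2u^2+1,2au,0,0,2a^2u^2)$ is indeed unit time-like, normal, and parallel; the paper uses an equivalent but differently normalized triple.

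However, there is a genuine gap in your treatment of Cases 3 and 4 that goes beyond ``bookkeeping''. After substituting the parallel frame into \eqref{LocalParametrization} you obtain a parametrization involving \emph{three} unknown profile functions $\alpha_1(s),\alpha_2(s),\alpha_3(s)$ of the arc-length parameter $s$, whereas the target formulas \eqref{BicHyprH4Example3} and \eqref{BicHyprH4Example4} involve only \emph{one} unknown function $A$ together with an explicit $s$. Two nontrivial reductions are needed. First, one must prove that the coefficient $\alpha_1$ multiplying the null direction is non-constant; the paper does this by checking directly that if $\alpha_1$ is constant the resulting hypersurface has constant mean curvature (its shape operator is the identity), contradicting the standing hypothesis. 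Only then may one reparametrize by setting the new $s$ equal to $\alpha_1$; this is why $s$ appears bare in factors like $su$ and $s/(4a)$. Second, the constraint $\langle x,x\rangle=-1$ is then used to solve for the remaining function $\alpha_2$ in terms of $s$ and $A:=\alpha_3$, collapsing everything to a single unknown. Your proposal mentions $\langle x,x\rangle=-1$ only for Cases 1--2 and does not flag the reparametrization at all; without it, the explicit forms \eqref{BicHyprH4Example3}--\eqref{BicHyprH4Example4} cannot be reached.
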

\begin{proof}
Let $M$ be a biconservative hypersurface in $\mathbb H^{4}$ with three distinct principal curvatures and $D$ is the distribution given by equation \eqref{DefinitionofD} for $n=3$, $m\in M$. Suppose $\Theta(t,u)$ be a parametrization of integral submanifolds $\hat M$ of $D$ passing through $m$. Then, it is in one of four forms given in Proposition \ref{AssumpH41}. Therefore, we have four cases.

\textbf{Case 1 and Case 2}. Let $\Theta$ has the form either given in equation \eqref{BicHyperLeviCvtaS4eq1k} or equation \eqref{IntSubmnDH42}.

In this case, by similar computations that we did in the proof of Theorem \ref{Classificationtheoremsphere}, we obtain that $M$ is congruent to one of hypersurfaces given in Case 1 and Case 2 of the theorem. 

\textbf{Case 3}. Suppose $\Theta$ has the form given in equation \eqref{IntSubmnDH43}. Then, the normal vector fields 
$$\xi_1(t,u)=\left(0,0,\cos  t,\sin  t,0\right),\hspace{.5 cm} \xi_2(t,u)=\left(\frac{4 a^2 u^2+3}{2 \sqrt{2}},\sqrt{2} a u,0,0,\frac{4 a^2 u^2+1}{2 \sqrt{2}}\right)$$ and $$\xi_3(t,u)=\left(\frac{1-4 a^2 u^2}{2 \sqrt{2}},-\sqrt{2} a u,0,0,\frac{3-4 a^2 u^2}{2 \sqrt{2}}\right)$$
form an parallel, orthonormal base for the normal space of $\hat M$ in $\mathbb E^5_1$. Combining these equations with equation \eqref{LocalParametrization}, we obtain 
\begin{align}\nonumber
\begin{split}
x(s,t,u)=&\left(u^2 \left(\sqrt{2} a^2 \beta _2(s)-\sqrt{2} a^2 \beta _3(s)+a\right)+\frac{a}{b^2}+a+\frac{1}{4 a}+\frac{3 \beta _2(s)}{2 \sqrt{2}}+\frac{\beta _3(s)}{2 \sqrt{2}},\right.\\&\left.
u \left(\sqrt{2} a \beta _2(s)-\sqrt{2} a \beta _3(s)+1\right),\cos  t \left(\frac{1}{b}+\beta _1(s)\right), \sin t \left(\frac{1}{b}+\beta _1(s)\right),\right.\\&\left.
u^2 \left(\sqrt{2} a^2 \beta _2(s)-\sqrt{2} a^2 \beta _3(s)+a\right)+\frac{a}{b^2}+a-\frac{1}{4 a}+\frac{\beta _2(s)}{2 \sqrt{2}}+\frac{3 \beta _3(s)}{2 \sqrt{2}} \right).
\end{split}
\end{align}
By defining $\alpha_1=\sqrt{2} a \beta _2(s)-\sqrt{2} a \beta _3(s)+1$, $\alpha_2=\frac{a}{b^2}+a+\frac{3 \beta _2(s)}{2 \sqrt{2}}+\frac{\beta _3(s)}{2 \sqrt{2}}$ and $\alpha_3=\frac{1}{b}+\beta _1(s)$, we obtain 
\begin{align}\label{IntSubmnDH43ParametrizationEq1}
\begin{split}
x(s,t,u)=&\left(a u^2 \alpha _1(s)+\frac{1}{4 a}+\alpha _2(s),u \alpha _1(s),\alpha _3(s) \cos  t,\alpha _3(s) \sin  t,\right.\\&\left.
\frac{1-2 \alpha _1(s)}{4 a}+a u^2 \alpha _1(s)+\alpha _2(s)\right).
\end{split}
\end{align}
Now, we want to prove the following assumption.
\begin{Assu}\label{MyAssu01}
If $\alpha_1=c$, then the hypersurface given by equation \eqref{IntSubmnDH43ParametrizationEq1} has constant mean curvature.
\end{Assu}
\textit{Proof of Assumption \ref{MyAssu01}.} If $\alpha_1=c$ is constant, then equation \eqref{IntSubmnDH43ParametrizationEq1} becomes 
\begin{align}\label{IntSubmnDH43ParametrizationEq2}
\begin{split}
x(s,t,u)=&\left(a c u^2+\frac{1}{4 a}+\alpha _2(s),c u,\alpha _3(s) \cos  t,\alpha _3(s) \sin  t,\right.\\&\left.
a c u^2+\frac{1-2 c}{4 a}+\alpha _2(s)\right).
\end{split}
\end{align}
Further, considering $\langle x,x\rangle=-1$ yields that 
$$\frac{c \left(-4 a \alpha _2(s)+c-1\right)}{4 a^2}+\alpha _3(s){}^2=-1.$$
Thus, we have $\alpha _2(s)= \frac{4 a^2 \alpha _3(s){}^2+4 a^2+c^2-c}{4 a c}$. Therefore, equation \eqref{IntSubmnDH43ParametrizationEq2} becomes 
\begin{align}\label{IntSubmnDH43ParametrizationEq3}
\begin{split}
x(s,t,u)=&\left(\frac{a \alpha _3(s){}^2}{c}+a \left(c u^2+\frac{1}{c}\right)+\frac{c}{4 a},c u,\alpha _3(s) \cos  t,\alpha _3(s) \sin  t,\right.\\&\left.
\frac{a \alpha _3(s){}^2}{c}+a \left(c u^2+\frac{1}{c}\right)-\frac{c}{4 a}\right).
\end{split}
\end{align}
However, a direct computation yields that the shape operator $S$ of equation \eqref{IntSubmnDH43ParametrizationEq3} is the identity operator acting on $TM$. This proves the Asumption \ref{MyAssu01}.\QEDA

Since $\alpha'$ is not a zero function, we may define a new local coordinate function $U$ by $U=\alpha_1(s)$ and two other functions by $B(s)=\alpha_2(\alpha_1^{-1}(U)),\ A(s)= \alpha_2(\alpha_1^{-1}(U))$. Considering $\langle x,x\rangle=-1$, we obtain 
$$B(U)=\frac{4 a^2 A(U)^2+4 a^2+U^2-U}{4 a U}.$$
Therefore, combining this definition and replacing $U$ by $s$, we obtain equation \eqref{BicHyprH4Example3}. It is important to note that the induced metric of $M$ is 
\begin{equation}\label{BicHyprH4Example3Metric}
g=\frac{\left(A(s)-s A'(s)\right)^2+1}{s^2}ds^2+A(s)^2dt^2+s^2du^2.
\end{equation}
Thus, $A$ is non-vanishing.

\textbf{Case 4}. Let $\Theta$ has the form given in equation \eqref{IntSubmnDH44}. Then, the normal vector fields 
$$\xi_1(t,u)=\left(0, 0, 0, 1, 0\right), \hspace{.5 cm}\xi_2(t,u)=\left(\frac{4 a^2 \left(t^2+u^2\right)+3}{2 \sqrt{2}},\sqrt{2} a t,\sqrt{2} a u,0,\frac{4 a^2 \left(t^2+u^2\right)+1}{2 \sqrt{2}}\right)$$ and 
$$\xi_3(t,u)=\left(\frac{1-4 a^2 \left(t^2+u^2\right)}{2 \sqrt{2}},-\sqrt{2} a t,-\sqrt{2} a u,0,\frac{3-4 a^2 \left(t^2+u^2\right)}{2 \sqrt{2}}\right)$$
form an parallel, orthonormal base for the normal space of $\hat M$ in $\mathbb E^5_1$. 

Therefore,  Theorem \ref{TheoremLocalClassForAnyBicHypr} implies that
\begin{align}\nonumber
\begin{split}
x(s,t,u)=&\left(a \alpha _1(s) \left(t^2+u^2\right)+\alpha _2(s),t \alpha _1(s),u \alpha _1(s),\alpha _3(s),-\frac{\alpha _1(s)}{2 a}+a \alpha _1(s) \left(t^2+u^2\right)+\alpha _2(s)\right),
\end{split}
\end{align}
where $\alpha_1=\sqrt{2} a \beta _2-\sqrt{2} a \beta _3+1,$ $\alpha_2=b+\frac{3 \beta _2}{2 \sqrt{2}}+\frac{\beta _3}{2 \sqrt{2}}$ and $\alpha_3=\sqrt{-\frac{1}{4 a^2}+\frac{b}{a}-1}+\beta _1.$ Again, by the similar calculation as in Case 3, we obtain equation \eqref{BicHyprH4Example4}.
\end{proof}

In the remaining part of this section, we emphasis to show existence of biconservative surfaces with non-constant mean curvature belonging to hypersurface family given by equations \eqref{BicHyprH4Example3} and \eqref{BicHyprH4Example4}.

Let $M$ be a hypersurface in $\mathbb H^4$ given by equation \eqref{BicHyprH4Example3} for a smooth non-vanishing function $A$. Since the induced metric $g$ of $M$ has the form given by equation \eqref{BicHyprH4Example3Metric}, therefore, $\displaystyle e_1=\frac{s}{\sqrt{\left(A(s)-s A'(s)\right)^2+1}} \partial_s$, $\displaystyle e_2=\frac 1{A(s)}\partial_t$ and $e_3=\frac 1s\partial_u$ form an orthonormal frame field for the tangent bundle of $M$. Furthermore, the unit normal vector field of $M$ in $\mathbb H^4$ is given by
\begin{align}\nonumber
\begin{split}
N=&\frac{s}{\sqrt{\left(A(s)-s A'(s)\right)^2+1}}\left({ \frac{2 a A(s) A'(s)}{s}-\frac{a \left(A(s)^2+1\right)}{s^2}+a u^2+\frac{1}{4 a}},{ u},{ \cos (t) A'(s)},\right.\\&\left.
{ \sin (t) A'(s)},\frac{2 a A(s) A'(s)}{s}-\frac{a \left(A(s)^2+1\right)}{s^2}+a u^2-\frac{1}{4 a}\right).
\end{split}
\end{align}
By  direct computations, we obtain that $e_1,\ e_2$ and $e_3$ are principal directions of $M$ with corresponding principal curvatures given by 
\begin{align}\label{BicHyprH4Example3PrnCrvts}
\begin{split}
k_1=&\frac{A''(s) s^2+A'(s) \left(\left(A(s)-s A'(s)\right)^2+1\right) s-A(s) \left(\left(A(s)-s A'(s)\right)^2+1\right)}{\left(\left(A(s)-s A'(s)\right)^2+1\right)^{3/2}},\\
k_2=&\frac{-A(s)^2+s A'(s) A(s)-1}{A(s) \sqrt{\left(A(s)-s A'(s)\right)^2+1}},\\
k_3=& \frac{s A'(s)-A(s)}{\sqrt{\left(A(s)-s A'(s)\right)^2+1}}.
\end{split}
\end{align}
Therefore, $e_1$ is proportional to $\nabla H$ which yields that $M$ is biconservative if and only if \linebreak $3k_1+k_2+k_3=0$. Combining this equation with equation \eqref{BicHyprH4Example3PrnCrvts}, we obtain the following.
\begin{Prop}
Let $M$ be the hypersurface in $\mathbb H^4$ given by equation \eqref{BicHyprH4Example3} for a non-vanishing function $A$ with non-constant mean curvature. Then,  $M$ is biconservative if and only if $A$ satisfies the second order ODE
$$3 s^2 A A''+5 s^3 A A'^3+\left(-15 s^2 A^2-s^2\right) A'^2+\left(15 s A^3+7 s A\right) A'-5 A^4-6 A^2-1=0.$$
\end{Prop}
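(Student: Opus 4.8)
The plan is to turn the biconservative condition into the algebraic relation $3k_1+k_2+k_3=0$ and then substitute the explicit principal curvatures \eqref{BicHyprH4Example3PrnCrvts}. As recorded just before the statement, $e_1$ is proportional to $\nabla H$, so by \eqref{BiconservativeEquationinRn1} together with $k_1=-3H/2$ (here $n=3$, since $\mathbb H^4=R^4(-1)$) the hypersurface is biconservative exactly when \eqref{BicEqMod2} holds, that is $3k_1+k_2+k_3=0$. I would first point out that the hypothesis of non-constant $H$ is precisely what makes the frame with $e_1=\nabla H/|\nabla H|$ well defined, while the non-vanishing of $A$ is what allows the factor $A$ in the denominator of $k_2$ to be cleared.

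The key preparatory step is to abbreviate the recurring radicand by $W=(A-sA')^2+1$ and to observe that the numerator of $k_1$ in \eqref{BicHyprH4Example3PrnCrvts} is exactly $s^2A''+W(sA'-A)$. Writing the three curvatures as
\[
k_1=\frac{s^2A''+W(sA'-A)}{W^{3/2}},\qquad k_2=\frac{sA'A-A^2-1}{A\,W^{1/2}},\qquad k_3=\frac{sA'-A}{W^{1/2}},
\]
I would multiply the relation $3k_1+k_2+k_3=0$ through by $A\,W^{3/2}$. Because of the form of the numerator of $k_1$, this single multiplication removes every half-integer power of $W$ at once and leaves a genuine polynomial identity in $s,A,A',A''$.

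Carrying this out and then grouping the three terms that carry a common factor $W$, I expect the relation to collapse to
\[
3s^2A\,A''+W\big(5sAA'-5A^2-1\big)=0.
\]
The remaining work is to substitute $W=A^2-2sAA'+s^2A'^2+1$, expand the product $W(5sAA'-5A^2-1)$, and collect monomials; this should reproduce exactly the coefficients $5s^3AA'^3$, $(-15s^2A^2-s^2)A'^2$, $(15sA^3+7sA)A'$, $-5A^4$, $-6A^2$ and $-1$ appearing in the asserted ODE.

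The argument is computationally routine rather than conceptually hard, and the only genuinely delicate point is the bookkeeping in expanding $W(5sAA'-5A^2-1)$. The one observation that makes everything work cleanly is that the numerator of $k_1$ already contains the factor $W$ multiplying $(sA'-A)$, so that after clearing denominators the $A''$-term is isolated and no radicals survive; this is exactly what lets the otherwise unwieldy expression reduce to a single second order ODE.
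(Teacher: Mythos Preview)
Your proposal is correct and follows exactly the approach the paper takes: the paper simply states that since $e_1$ is proportional to $\nabla H$, biconservativity is equivalent to $3k_1+k_2+k_3=0$, and then asserts that combining this with \eqref{BicHyprH4Example3PrnCrvts} yields the ODE. Your write-up merely supplies the algebraic details that the paper omits, and the intermediate simplification $3s^2AA''+W(5sAA'-5A^2-1)=0$ with $W=(A-sA')^2+1$ is exactly the right way to organize the computation.
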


Next, we assume that $M$ is the hypersurface given by equation \eqref{BicHyprH4Example3}. By a direct computation, we see that $\partial_s,\ \partial_t,\ \partial_u$ are principal directions of $M$ with corresponding principal curvatures given by 
\begin{align}\nonumber
\begin{split}
k_1=&\frac{A''(s) s^2+A'(s) \left(\left(A(s)-s A'(s)\right)^2+1\right) s-A(s) \left(\left(A(s)-s A'(s)\right)^2+1\right)}{\left(\left(A(s)-s A'(s)\right)^2+1\right)^{3/2}},\\
k_2=k_3=& \frac{s A'(s)-A(s)}{\sqrt{\left(A(s)-s A'(s)\right)^2+1}}.
\end{split}
\end{align}
Hence, we have the following.
\begin{Prop}
Let $M$ be the hypersurface in $\mathbb H^4$ given by equation \eqref{BicHyprH4Example4} for a non-vanishing function $A$ with non-constant mean curvature. Then,  $M$ is biconservative if and only if $A$ satisfies the second order ODE
$$3 s^2 A''+5 s^3 A'^3-15 s^2 A A'^2+\left(15 s A^2+5 s\right) A'-5 A^3-5 A=0.$$
\end{Prop}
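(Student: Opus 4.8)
The plan is to work entirely from the explicit parametrization \eqref{BicHyprH4Example4} together with the principal curvatures that have just been recorded above the statement, so that the proof reduces to a single scalar identity. The first observation I would make is that the parametrization enters $t$ and $u$ symmetrically, so the coordinate directions $\partial_t$ and $\partial_u$ share the principal curvature $k_2=k_3$ while $\partial_s$ carries $k_1$, and all three principal curvatures, hence also $H=\tfrac13(k_1+2k_2)$, are functions of $s$ alone. Moreover the induced metric is diagonal in $(s,t,u)$, so that $\partial_s$ is orthogonal to the $\partial_t,\partial_u$-plane; consequently $\nabla H$ is proportional to $\partial_s$. Since $\partial_s$ is the principal direction belonging to $k_1$, this gives $S(\nabla H)=k_1\nabla H$.

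Next I would feed this into the biconservativity criterion \eqref{BiconservativeEquationinRn1}. With $n=3$ and $S(\nabla H)=k_1\nabla H$ it becomes $\big(k_1+\tfrac{3H}{2}\big)\nabla H=0$. The hypothesis that $M$ has non-constant mean curvature guarantees, via Remark \ref{FirstRemark}, that $\nabla H$ does not vanish on an open set, so there the scalar factor must vanish: $k_1=-\tfrac{3H}{2}$. This is precisely the trace relation \eqref{BicEqMod2}, which here reads $3k_1+k_2+k_3=0$ and, because $k_2=k_3$, collapses to $3k_1+2k_2=0$. Thus $M$ is biconservative if and only if $3k_1+2k_2=0$ holds identically in $s$, and every implication here is reversible.

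The final step is the substitution. Abbreviating $P=A-sA'$ and $W=\sqrt{P^2+1}$, the recorded formulas read $k_1=\big(s^2A''+W^2(sA'-A)\big)/W^3$ and $k_2=(sA'-A)/W=-P/W$. Multiplying $3k_1+2k_2=0$ through by $W^3$ and simplifying gives $3s^2A''+5W^2(sA'-A)=0$, that is $3s^2A''-5P(P^2+1)=0$. Expanding $P^3=(A-sA')^3$ by the binomial theorem and gathering powers of $A'$ then produces exactly
$$3 s^2 A''+5 s^3 A'^3-15 s^2 A A'^2+\left(15 s A^2+5 s\right) A'-5 A^3-5 A=0,$$
which is the asserted ODE.

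I expect the genuine content of the argument to lie not in the algebra but in the two structural facts used at the outset: that $\partial_s$ is the $k_1$-eigendirection of the shape operator, so that the replacement $S(\nabla H)=k_1\nabla H$ is valid, and that the non-constant mean curvature assumption lets us cancel the factor $\nabla H$. The passage from $3k_1+2k_2=0$ to the displayed ODE is then only the routine expansion of $(A-sA')^3$ and a collection of terms, the factor $W^3$ disappearing cleanly because $k_1$ and $k_2$ share a common denominator structure.
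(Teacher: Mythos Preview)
Your proposal is correct and follows essentially the same route as the paper: the paper records the principal curvatures for the parametrization \eqref{BicHyprH4Example4}, observes (as in the preceding Example~3 case) that $\nabla H$ lies in the $k_1$-eigendirection so that biconservativity is equivalent to $3k_1+k_2+k_3=0$, and then substitutes to obtain the ODE. You make the same reduction explicit, supplying the justifications (diagonal metric, $\nabla H\parallel\partial_s$, cancellation of $\nabla H$ via the non-constant mean curvature hypothesis) and carrying out the expansion of $-5P(P^2+1)$ in detail; the paper leaves these steps to the reader with ``Hence, we have the following.''
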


\end{document}